% !TeX spellcheck = de_DE
% Rendiconti del Circolo Matematico di Palermo
%Aemulationes mathematical
%Monatshefte fur Mathematik
% Corrected on June 10, 2006
\documentclass[a4paper,12pt]{amsart}
\usepackage{amssymb}
\usepackage{ifthen}
\usepackage[usenames]{color}
\usepackage{graphicx}
\nonstopmode \numberwithin{equation}{section}
\setlength{\textwidth}{15cm} \setlength{\oddsidemargin}{0cm}
\setlength{\evensidemargin}{0cm} \setlength{\footskip}{40pt}
\pagestyle{plain}

\newtheorem{thm}{Theorem}[section]

\newtheorem{cor}{Corollary}[section]
\newtheorem{lem}{Lemma}[section]
\newtheorem{prop}{Proposition}[section]

\newtheorem{claim}{Claim}[section]
\newtheorem{subclaim}{Subclaim}[section]
\newtheorem{conj}[equation]{Conjecture}
\newtheorem{case}{Case}[section]
\newtheorem*{mysolution}{Solution}
\newtheorem{step}{Step}[section]
\theoremstyle{definition}
\newtheorem{defn}{Definition}[section]
\newtheorem{examp}{Example}[section]
\newtheorem{prob}[equation]{Problem}
\newtheorem{ques}[equation]{Question}
\newtheorem{rem}{Remark}[section]
\newcounter {own}
\def\theown {\thesection       .\arabic{own}}

\newenvironment{pf}[1][]{%
	\vskip 3mm
	\noindent
	\ifthenelse{\equal{#1}{}}%
	{{\slshape Proof. }}%
	{{\slshape #1.} }%
}%
{\qed\bigskip}

\newcounter{alphabet}
\newcounter{tmp}

\makeatletter

\newcommand{\Rref}[1]{\@ifundefined{r@#1}{}{\setcounter{tmp}{\ref{#1}}\Alph{tmp}}}

%\newcommand{\D}{{\mathbb D}}
%\newcommand{\T}{{\mathbb T}}

%\newcommand{\arg}{{\operatorname{arg}}}

%%%%%%%%%%%%%%%%%%%%%%%%%%%%%%%%%%%%%%%%%%%%%%%%%%%%%%%%%%%%%%%%%%%%%%%%%%%%%%%%%5

%\newcommand{\pad}[2]{\frac{\der #1}{\der #2}}
\def\be{\begin{equation}}
	\def\ee{\end{equation}}

\newcommand{\ben}{\begin{enumerate}}
	\newcommand{\een}{\end{enumerate}}

\newcommand{\blem}{\begin{lem}}
	\newcommand{\elem}{\end{lem}}
\newcommand{\bthm}{\begin{thm}}
	\newcommand{\ethm}{\end{thm}}
\newcommand{\bcor}{\begin{cor}}
	\newcommand{\ecor}{\end{cor}}
\newcommand{\beg}{\begin{examp}}
	\newcommand{\eeg}{\end{examp}}
\newcommand{\begs}{\begin{examples}}
	\newcommand{\eegs}{\end{examples}}
\newcommand{\bdefe}{\begin{defn}}
	\newcommand{\edefe}{\end{defn}}
\newcommand{\bprob}{\begin{prob}}
	\newcommand{\eprob}{\end{prob}}
\newcommand{\bques}{\begin{ques}}
	\newcommand{\eques}{\end{ques}}
\newcommand{\bei}{\begin{itemize}}
	\newcommand{\eei}{\end{itemize}}
\newcommand{\bcl}{\begin{claim}}
	\newcommand{\ecl}{\end{claim}}
\newcommand{\bscl}{\begin{subclaim}}
	\newcommand{\escl}{\end{subclaim}}
\newcommand{\bca}{\begin{case}}
	\newcommand{\eca}{\end{case}}
\newcommand{\bstep}{\begin{step}}
	\newcommand{\estep}{\end{step}}
\newcommand{\bsol}{\begin{mysolution}}
	\newcommand{\esol}{\end{mysolution}}
\newcommand{\bcon}{\begin{conj}}
	\newcommand{\econ}{\end{conj}}
\newcommand{\bcons}{\begin{conjs}}
	\newcommand{\econs}{\end{conjs}}
\newcommand{\bprop}{\begin{prop}}
	\newcommand{\eprop}{\end{prop}}
\newcommand{\br}{\begin{rem}}
	\newcommand{\er}{\end{rem}}
\newcommand{\brs}{\begin{rems}}
	\newcommand{\ers}{\end{rems}}
\newcommand{\bo}{\begin{obser}}
	\newcommand{\eo}{\end{obser}}
\newcommand{\bos}{\begin{obsers}}
	\newcommand{\eos}{\end{obsers}}
\newcommand{\bpf}{\begin{pf}}
	\newcommand{\epf}{\end{pf}}
\newcommand{\ba}{\begin{array}}
	\newcommand{\ea}{\end{array}}
\newcommand{\beq}{\begin{eqnarray}}
	\newcommand{\beqq}{\begin{eqnarray*}}
		\newcommand{\eeq}{\end{eqnarray}}
	\newcommand{\eeqq}{\end{eqnarray*}}

\begin{document}
	\title{Schwarz-Pick type lemma and Landau type theorem for $\alpha$-harmonic mappings}
	
	\author[Vibhuti Arora]{Vibhuti Arora}
	\address{Vibhuti Arora, Department of Mathematics, National Institute of Technology Calicut, Kerala 673 601, India.}
	\email{vibhutiarora1991@gmail.com, vibhuti@nitc.ac.in}

	\author{Jiaolong Chen}
	\address{Jiaolong Chen, Key Laboratory of Computing and Stochastic Mathematics (Ministry of Education), School of Mathematics and Statistics, Hunan Normal University, Changsha, Hunan 410081, P. R. China}
	\email{jiaolongchen@sina.com}
	
	\author[Shankey Kumar]{Shankey Kumar}
	\address{Shankey Kumar, Department of Mathematics, Indian Institute of Technology Madras, Chennai, 600036, India.}
	\email{shankeygarg93@gmail.com}
	
	\author{Qianyun Li${}^{~\mathbf{*}}$}
	\address{Qianyun Li, Key Laboratory of Computing and Stochastic Mathematics (Ministry of Education), School of Mathematics and Statistics, Hunan Normal University, Changsha, Hunan 410081, P. R. China}
	\email{liqianyun@hunnu.edu.cn}
	
	\keywords{Invariant Laplacian equation, Poisson-Szeg\"{o} integral, Schwarz-Pick type inequality, Landau type Theorem.\\
		$^{\mathbf{*}}$Corresponding author}
	
	\subjclass[2020]{Primary 31B05; Secondary 42B30.}

	\maketitle
	
	%\def\thefootnote{}
	%\footnotetext{
	%\texttt{\tiny File:~\jobname .tex,
	%          printed: \number\year-\number\month-\number\day,
	%          \thehours.\ifnum\theminutes<10{0}\fi\theminutes}
	%}
	\makeatletter\def\thefootnote{\@arabic\c@footnote}\makeatother
	\begin{abstract}
		The aim of this paper is twofold.
		First, we obtain a Schwarz-Pick type lemma for the $\alpha$-harmonic mapping $u=P_{\alpha}[\phi]$,
		where $\phi\in L^{p}(\mathbb{S}^{n-1},\mathbb{R} )$ and $p\in[1,\infty]$.
		We get an explicit form of the sharp function $\mathbf{C}_{\alpha, q}(x)$ in the inequality
		$|\nabla u(x)| \leq \mathbf{C}_{\alpha, q}(x)\|\phi\|_{L^p(\mathbb{S}^{n-1}, \mathbb{ R} )}$.
		Second, we prove a Landau type theorem for $u=P_{\alpha}[\phi]$,
		where $\phi\in L^{\infty}(\mathbb{S}^{n-1},\mathbb{R}^{n})$.
		These  results generalize and extend the corresponding
		results due to Kalaj  (Complex Anal. Oper. Theory, 2024) and
		Khalfallah et al. (Mediterr. J. Math.,
		2021).
	\end{abstract}
	
	\section{Introduction  and statement of the main results}
	\subsection{Preliminaries}
	For $n\ge 2$, denote by $\mathbb{R}^{n}$   the usual real Euclidean space of dimension $n$.
	For $x=(x_{1},\ldots,x_{n})\in \mathbb{R}^{n}$, we denote the Euclidean length   of $x$ by $|x|$ and sometimes   identify each point $x$ with a column vector.
	For two column vectors $x,y\in \mathbb{R}^{n}$, the Euclidean inner product of $x$ and $y$ is given by
	$\langle x,y\rangle$.
	We use $\mathbb{B}^n( r)=\{x\in \mathbb{R}^{n}:|x|<r\}$ to denote the ball of radius $r>0$ with center $0$. In particular,
	we write  $\mathbb{B}^n=\mathbb{B}^n( 1)$.
	
	For a matrix $A=\big(a_{ij}\big)_{n\times n}\in \mathbb{R}^{n\times n}$, we will consider the matrix norm
	$\|A\|=\sup\{|A\xi|:\; \xi\in \mathbb{S}^{n-1}\}$ and the matrix function $l(A)=\inf\{|A\xi|:\; \xi\in \mathbb{S}^{n-1}\}$,
	where $\mathbb{S}^{n-1}=\{x\in \mathbb{R}^{ n}:|x|=1\}$.
	
	\subsubsection{The generalized hypergeometric series}
	
	For $a\in \mathbb{R}$ and $k\in\{0,1,2,\ldots\}$, we use $(a )_{k}$ to denote the factorial function
	with
	\begin{center}
		$(a )_{0}=1$ ($a\not=0$) and $(a)_{k}=a (a +1) \cdots(a +k-1)$ ($k\geq 1$).
	\end{center}
	If $a$ is neither zero nor a negative integer, then
	$$
	(a)_{k}=\frac{\Gamma(a+k) }{ \Gamma(a)} ,
	$$
	where $\Gamma$ is the Gamma function.
	Let $p$, $q=p-1$ be two positive integers and $s\in \mathbb{R}$. We define the generalized hypergeometric series by
	\begin{equation}\label{eq-1.1}
		{ }_{p} F_{q}\left(a_{1}, a_{2}, \ldots, a_{p} ; b_{1}, b_{2}, \ldots, b_{q} ; s\right)=\sum_{k=0}^{\infty} \frac{\left(a_{1}\right)_{k} \cdots\left(a_{p}\right)_{k}}{\left(b_{1}\right)_{k} \cdots\left(b_{q}\right)_{k}} \frac{s^{k}}{k !},
	\end{equation}
	where $a_{i}, b_{j}\in \mathbb{R}$ ($1\leq i\leq p$, $1\leq j\leq q$) and  $b_{j}$  is neither zero nor a negative integer.
	If $\sum_{j=1}^{q}b_{j}-\sum_{i=1}^{p}a_{i}>0$, then the series (\ref{eq-1.1}) converges absolutely in $[-1,1]$ (cf. \cite[Theorem 2.1.2]{an1999}).
	%Further, by \cite[Theorem 2.2.2]{an1999}, we know that
	%	\begin{equation}\label{eq-01.2}
	%		{ }_{2} F_{1}\left(a, b; c ; s\right)=(1-s)^{c-a-b}{ }_{2} F_{1}\left(c-a, c-b; c ; s\right)
	%	\end{equation}
	%	and
	%		\begin{equation}\label{eq-01.3}
	%	F(a, b; c; 1) = \frac{\Gamma(c) \Gamma(c - a - b)}{\Gamma(c - a) \Gamma(c - b)}
	%	\end{equation}
	% 	where $c-a-b>0$.
	
	\subsubsection{Invariant  Laplacian  Equation}
	A mapping $u \in C^{2}\left(\mathbb{B}^{n}, \mathbb{R}^{n}\right)(n \geq 2)$ is called to be {\em $\alpha$-harmonic} if it satisfies the  (M\"{o}bius) invariant  Laplacian equation
	$$
	\Delta_{\alpha}u(x)=(1-|x|^{2})\left\{\left(1-|x|^{2}\right) \sum_{j=1}^{n} \frac{\partial^{2}u}{\partial x_{j}^{2}}-2\alpha \sum_{j=1}^{n} x_{j} \frac{\partial u}{\partial x_{j}}+\alpha(2-n-\alpha)u\right\}=0.
	$$
	In this paper, we call $\Delta_{\alpha}$ the (M\"{o}bius) invariant Laplacian, since
	\begin{equation*}
		\Delta_{\alpha}\left\{\left(\operatorname{det} D\psi (x)\right)^{\frac{n-2+ \alpha}{2 n}} u(\psi(x))\right\}=\left(\operatorname{det} D\psi (x)\right)^{\frac{n-2+\alpha}{2 n}}\left(\Delta_{\alpha} u\right)(\psi(x))
	\end{equation*}
	for every  $u \in C^{2}\left(\mathbb{B}^n,\mathbb{R}^n\right)$ and for every $\psi \in \mathcal{M }(\mathbb{B}^n )$ (see \cite[Proposition 3.2]{Liu09}).
	Here  $ \mathcal{M} (\mathbb{B}^n )$  denotes the group of those M\"{o}bius transformations that map $\mathbb{B}^{n}$ onto $\mathbb{B}^{n}$.

	When $\alpha=0$ (resp. $\alpha=2-n$), the mapping $u$ is called harmonic (resp. hyperbolic harmonic).
	The properties of these two classes
	of mappings have been investigated extensively by many authors  (cf. \cite{ABR92,Bur92,CH2013, JP99, sto2016}).
	
	The operator $\Delta_{\alpha}$ is closely related to polyharmonic mappings (see \cite{AH2014, Liu21}) and to solutions of the Weinstein equation (see \cite{leu}).
	For a discussion of the fundamental properties of the operator $\Delta_{\alpha}$ in the unit disk $\mathbb{D}$, the reader is referred to (\cite{chen23,chen15,k2024,KMM2021, Long2025,Ol14}).
	For the higher-dimensional setting, see (\cite{chenli2024A,chen24,Liu04, Liu09, ZHD24,zhou22}) and the references therein.
	In this paper,
	we focus our investigations on the high-dimensional case.
	In the rest of the paper, we always assume that  $n\geq3$.

	For $p\in(0,\infty]$ and $m,n\in \mathbb{Z}^{+}$, we use  $L^{p}(\mathbb{S}^{n-1},\mathbb{R}^{m})$ to denote the space of   those  measurable  mappings
	$f: \mathbb{S}^{n-1}\rightarrow\mathbb{ R}^{m}$ such that  $\|f\|_{L^p(\mathbb{S}^{n-1}, \mathbb{ R}^{m})}<\infty$, where
	$$\|f\|_{L^p(\mathbb{S}^{n-1}, \mathbb{ R}^{m})}
	=\begin{cases}
		\displaystyle \;\left(\int_{\mathbb{S}^{n-1}}|f(\zeta)|^{p} d \sigma(\zeta)\right)^{1 / p},  & \text{ if } p\in (0,\infty),\\
		\displaystyle \;\text{esssup}_{\zeta\in \mathbb{S}^{n-1}}  |f( \zeta)|,   \;\;\;\;& \text{ if } p=\infty.
	\end{cases}$$
	Here,  $d \sigma$  denotes the normalized surface measure on  $\mathbb{S}^{n-1}$   so that $\sigma(\mathbb{S}^{n-1})=1$.
	
	If $\phi\in L^{p}(\mathbb{S}^{n-1},\mathbb{R}^{m})$ with $p\in[1,\infty]$,  we define the invariant Poisson integral or   Poisson-Szeg\"{o} integral
	of $\phi$ in $\mathbb{B}^{n}$ by $P_{\alpha}[\phi]$, where
	\begin{equation*}\label{eq-01.4}
		P_{\alpha}[\phi](x)=\int_{\mathbb{S}^{n-1}} P_{\alpha}(x, \zeta) \phi(\zeta) d \sigma(\zeta),
	\end{equation*}
	
	\begin{equation}\label{eq-1.2}
		P_{\alpha}(x, \zeta)=C_{n, \alpha} \frac{\left(1-|x|^{2}\right)^{1- \alpha}}{|x-\zeta|^{n- \alpha}}
		\;\;\text{and}\;\;C_{n, \alpha}=\frac{\Gamma\left(\frac{n-\alpha}{2}\right) \Gamma(1-\frac{\alpha}{2})}{\Gamma\left(\frac{n}{2}\right) \Gamma(1- \alpha)}
	\end{equation}
	(cf. \cite[Section 1]{li24}).
	
	In \cite{Liu04}, Liu and Peng  investigated  the solvability of the
	Dirichlet problem associated with the (M\"{o}bius) invariant Laplacian
	\begin{equation}\label{eq-1.3}
		\left\{\begin{array}{ll}
			\Delta_{\alpha} u(x)=0, & \text{ if} \;x\in\mathbb{B}^{n}, \\
			u(\zeta)=\phi(\zeta),& \text{ if}\; \zeta\in\mathbb{S}^{n-1},
		\end{array}\right.
	\end{equation}
	where $\phi\in C ( \mathbb{S}^{n-1} ,\mathbb{ R}^{n})$.
	They showed that the Dirichlet problem \eqref{eq-1.3} has a solution if and only if $\alpha<1$ (see \cite[Theorem 2.4]{Liu04}).
	In this case, the solution is unique and
	$$
	u(x)= P_{\alpha}[\phi](x).	
	$$

	\subsection{Main results}	
	
	The classical Schwarz-Pick lemma \cite{alh} states that a holomorphic function  $f$  from  $\mathbb{D}$  into itself   satisfies the inequality  $$|f'(z)| \leq \frac{1-|f(z)|^{2}}{1-|z|^{2}}$$ for all  $z \in \mathbb{D}$.
	Later,  Colonna \cite{col} generalized the result to harmonic functions. He showed that
	if $f$ is a harmonic function from $\mathbb{D}$  into itself, then for all  $z \in \mathbb{D}$,
	$$|f_{z}(z)|+|f_{\overline{z}}(z)|\leq \frac{4}{\pi} \frac{1}{1-|z|^{2}}.$$
	
	In \cite[Lemma 3.2]{li24}, the second and the last authors of this paper,
	obtained a Schwarz-Pick  type lemma for $\alpha$-harmonic mappings at the point $x=0$.
	If $u=P_{\alpha}[\phi]$  in $\mathbb{B}^{n}$ and
	$\phi\in L^p(\mathbb{S}^{n-1},\mathbb{R}^{n})$,
	then the following sharp inequality holds:
	\begin{equation}\label{eq-1.4}
		\|D u(0) \| \leq (n-\alpha) C_{n,\alpha}\left(\frac{\Gamma\left(\frac{n}{2}\right) \Gamma\left(\frac{1+q}{2}\right)}{\sqrt{\pi} \Gamma\left(\frac{n+q}{2}\right)}\right)^{\frac{1}{q}}\|\phi\|_{L^p(\mathbb{S}^{n-1}, \mathbb{ R}^{n} )},
	\end{equation}
	where $\alpha<1$, $p\in[1,\infty]$ and $q$ is its conjugate.
	Here and hereafter, $Du(x)$ denotes the usual Jacobian matrix of $u$ at $x$
	$$
	Du(x)=\left(
	\begin{array}{cccc}
		\frac{\partial u_{1}(x)}{\partial x_{1}}    & \cdots & \frac{\partial u_{1}(x)}{\partial x_{n}} \\
		\vdots & \ddots  & \vdots \\
		\frac{\partial u_{n}(x)}{\partial x_{1}}  & \cdots & \frac{\partial u_{n}(x)}{\partial x_{n}}\\
	\end{array}
	\right) = \big(\nabla u_1(x) \cdots \nabla u_n(x)\big)^T
	\;\text{and}\;
	u(x)=\left(
	\begin{array}{cccc}
		u_{1}(x) \\
		\vdots    \\
		u_{n}(x) \\
	\end{array}
	\right),
	$$
	where $T$ is the transpose and the gradients $\nabla u_j$ are understood as column vectors.
	
	Recently, Kalaj \cite[Theorem 1.2]{k2024} established a Schwarz-Pick lemma for $\alpha$-harmonic functions in  $\mathbb{D}$.
	For some other generalizations of the Schwarz-Pick lemma
	we refer to the papers \cite{CH2013, kp2012, KMP2022, LMW25, Liu22, zhu19}.

	The first aim of this paper is to generalize  \cite[Theorem 1.2]{k2024} to the higher-dimensional case. Our results are as follows.

	\begin{thm}\label{thm-1.1}
		Let $\alpha<1$, $p\in(1,\infty]$  and $q$ be its conjugate.
		If $u=P_{\alpha}[\phi]$ in $\mathbb{B}^{n}$ with $\phi\in L^{p}(\mathbb{S}^{n-1},\mathbb{R})$, then
		for any $x \in \mathbb{B}^{n} $,
		\begin{align}\label{eq-1.5}
			|\nabla u(x)| \leq
			\frac{(n-\alpha)C_{n,\alpha}}{\left(1-|x|^{2}\right)^{\frac{n(q-1)+1}{q}}}\left(\sup_{l \in\mathbb{S}^{n-1}}I(\alpha,q,x,l)+J(\alpha,q,x )\right)^{\frac{1}{q}}\|\phi\|_{L^p( \mathbb{S}^{n-1}, \mathbb{ R} ) },
		\end{align}
		where $\sup_{l \in\mathbb{S}^{n-1}}I(\alpha,q,x,l)$ and $ J(\alpha,q,x )$ are the mappings from Lemma \ref{lem-2.4} and
		\eqref{eq-2.7}, respectively. The above inequality is sharp when $x=0$ or $\alpha=2-n$.
		
	\end{thm}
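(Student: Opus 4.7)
The plan is to differentiate $u=P_\alpha[\phi]$ under the integral sign and then dualize: writing
\[
\nabla u(x)=\int_{\mathbb{S}^{n-1}}\nabla_x P_\alpha(x,\zeta)\,\phi(\zeta)\,d\sigma(\zeta),
\]
I would choose $l\in\mathbb{S}^{n-1}$ with $|\nabla u(x)|=\langle l,\nabla u(x)\rangle$ and apply H\"older's inequality with conjugate exponents $p$ and $q$ to obtain
\[
|\nabla u(x)|\le\left(\int_{\mathbb{S}^{n-1}}\bigl|\langle l,\nabla_x P_\alpha(x,\zeta)\rangle\bigr|^q\,d\sigma(\zeta)\right)^{1/q}\|\phi\|_{L^p(\mathbb{S}^{n-1},\mathbb{R})}.
\]
Taking the supremum in $l\in\mathbb{S}^{n-1}$ on the right-hand side is the source of the $\sup_l$ appearing in \eqref{eq-1.5}.

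Next I would differentiate the explicit kernel in \eqref{eq-1.2} via the product rule, which yields
\[
\nabla_x P_\alpha(x,\zeta)=-C_{n,\alpha}(1-|x|^2)^{-\alpha}\left[\frac{2(1-\alpha)x}{|x-\zeta|^{n-\alpha}}+\frac{(n-\alpha)(1-|x|^2)(x-\zeta)}{|x-\zeta|^{n-\alpha+2}}\right].
\]
Pulling out the common scalar $(n-\alpha)C_{n,\alpha}(1-|x|^2)^{-\alpha}$ and rearranging powers of $(1-|x|^2)$ after $L^q$-integration will produce the prefactor $(n-\alpha)C_{n,\alpha}/(1-|x|^2)^{(n(q-1)+1)/q}$ appearing in \eqref{eq-1.5}. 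The heart of the proof, and the step I expect to be the main obstacle, is then to show that the remaining integrand, whose essential scalar dependence is on $\langle l,x\rangle$ and $\langle l,x-\zeta\rangle$, contributes an $L^q$-integral bounded by $\sup_{l}I(\alpha,q,x,l)+J(\alpha,q,x)$ in the notation of Lemma~\ref{lem-2.4} and \eqref{eq-2.7}. I expect this splitting to arise from decomposing the bracket into an $l$-anisotropic contribution (producing $I$) and an isotropic contribution (producing $J$), via identities such as $\langle l,x\rangle=\langle l,x-\zeta\rangle+\langle l,\zeta\rangle$ and spherical moment formulas for $\int|x-\zeta|^{-s}\,d\sigma$ and its $\langle l,\zeta\rangle$-weighted analogues, which are evaluable through the generalized hypergeometric series \eqref{eq-1.1}.

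For the sharpness claim, the $x=0$ case is clean: the first term in the bracket vanishes, leaving $\nabla_x P_\alpha(0,\zeta)=(n-\alpha)C_{n,\alpha}\zeta$, so the $L^q$-integral reduces to a standard spherical moment of $|\langle l,\zeta\rangle|^q$, recovering the previously-known sharp constant in \eqref{eq-1.4}; the extremal $\phi$ from that case also saturates \eqref{eq-1.5}. At $\alpha=2-n$, the numerical coincidence $2(1-\alpha)=n-\alpha=2(n-1)$ makes the two bracket terms acquire proportional coefficients and combine into a single vector expression; the standard H\"older extremizer, chosen through the sign and $(q-1)$-st power of the combined kernel, then attains equality in both the H\"older step and the pointwise kernel estimate, yielding sharpness in \eqref{eq-1.5}.
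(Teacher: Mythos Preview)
Your outline matches the paper's strategy, but you are vague exactly where the work lies, and your sharpness argument at $\alpha=2-n$ misidentifies the mechanism.

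For the step you flag as the main obstacle, the paper supplies two concrete ingredients. First, Lemma~\ref{lem-2.1} (proved as in \cite{chen2023B}) shows that the H\"older constant takes the clean one-term form
\[
\mathbf{C}_{\alpha,q}(x;l)=\frac{(n-\alpha)C_{n,\alpha}}{(1-|x|^2)^{\frac{n(q-1)+1}{q}}}\Bigl(\int_{\mathbb{S}^{n-1}}|\eta-x|^{(n-\alpha)q-2n+2}\,\bigl|\langle\eta+cx,l\rangle\bigr|^{q}\,d\sigma(\eta)\Bigr)^{1/q},
\]
with $c=(2-\alpha-n)/(n-\alpha)$; the two bracket terms in your gradient formula collapse to the single inner product $\langle\eta+cx,l\rangle$ for \emph{every} $\alpha$, and this reduction is more than ``pulling out a scalar and rearranging powers of $1-|x|^2$''. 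Second (Lemma~\ref{lem-2.2}), the split into $I+J$ comes from the Lagrange mean value inequality applied to $t\mapsto|t|^q$: with $a=\langle\eta,l\rangle$, $b=c\langle x,l\rangle$ and $|a|\le1$, one gets $|a+b|^q\le|a|^q+q|c|\,|x|\,(1+|c|\,|x|)^{q-1}$. Integrating the first summand against $|\eta-x|^{(n-\alpha)q-2n+2}$ gives $I(\alpha,q,x,l)$; integrating the constant second summand, via the ${}_2F_1$ evaluation of $\int_{\mathbb{S}^{n-1}}|\eta-x|^{-2\lambda}\,d\sigma$, gives $J(\alpha,q,x)$. Your proposed route through $\langle l,x\rangle=\langle l,x-\zeta\rangle+\langle l,\zeta\rangle$ and an anisotropic/isotropic decomposition of the bracket is not the mechanism used. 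As for sharpness at $\alpha=2-n$: the relevant fact is that $c=0$, so $J\equiv0$ and the Lagrange step becomes an equality; the bound \eqref{eq-1.5} then coincides with $\mathbf{C}_{\alpha,q}(x)$, whose sharpness is precisely the hyperbolic-harmonic result of \cite{KHM2022}. The ``proportional coefficients combine into a single vector expression'' phenomenon you describe is what Lemma~\ref{lem-2.1} already accomplishes for every $\alpha$; what is special at $\alpha=2-n$ is that the remainder $J$ vanishes, not that a new combination occurs.
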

	
	\medskip

	\begin{thm}\label{thm-1.2}
		Let $\alpha<1$ and $u=P_{\alpha}[\phi]$ in $\mathbb{B}^{n}$ with $\phi\in L^{1}(\mathbb{S}^{n-1},\mathbb{R})$.
		
		$(1)$ If $2-n\leq\alpha<1$, then for any $x \in \mathbb{B}^{n} $,
		$$
		|\nabla u(x)|
		\leq C_{n,\alpha} \frac{ n-\alpha+(n+\alpha-2)|x| }{(1-|x|)^{n}} \|\phi\|_{L^{1}( \mathbb{S}^{n-1}, \mathbb{ R} ) };
		$$
		
		$(2)$ 	If $\alpha<2-n$,  then for any $x \in \mathbb{B}^{n} $,
		$$
		|\nabla u(x)|
		\leq  C_{n,\alpha}    \frac{ n-\alpha-(n+\alpha-2)|x| }{(1-|x|)^{ n}}\|\phi\|_{L^{1}( \mathbb{S}^{n-1}, \mathbb{ R} ) }.
		$$
		The above two inequalities are sharp at the point $x=0$ or $2-n\leq\alpha<1$.
	\end{thm}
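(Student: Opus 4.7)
The plan is to reduce the estimate to the pointwise supremum $M_\alpha(x):=\sup_{\zeta\in\mathbb{S}^{n-1}}|\nabla_xP_\alpha(x,\zeta)|$ and then compute $M_\alpha(x)$ explicitly by exploiting the rotational symmetry of $P_\alpha$.

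Differentiating $u=P_\alpha[\phi]$ under the integral sign gives
$$
\nabla u(x)=\int_{\mathbb{S}^{n-1}}\nabla_xP_\alpha(x,\zeta)\,\phi(\zeta)\,d\sigma(\zeta),
$$
so that $|\nabla u(x)|\le M_\alpha(x)\|\phi\|_{L^{1}(\mathbb{S}^{n-1},\mathbb{R})}$. A logarithmic differentiation of \eqref{eq-1.2} yields
$$
\nabla_xP_\alpha(x,\zeta)=-C_{n,\alpha}\frac{(1-|x|^2)^{-\alpha}}{|x-\zeta|^{n-\alpha+2}}\bigl[2(1-\alpha)\,x\,|x-\zeta|^2+(n-\alpha)(1-|x|^2)(x-\zeta)\bigr],
$$
and using the identity $\langle x,x-\zeta\rangle=\tfrac12(|x-\zeta|^2-(1-|x|^2))$ one can rewrite $|\nabla_xP_\alpha(x,\zeta)|^2$ as
$$
C_{n,\alpha}^2(1-|x|^2)^{-2\alpha}\,s^{-(n-\alpha+1)}\bigl[2(1-\alpha)L\,s+(n-\alpha)(n+\alpha-2)(1-|x|^2)^2\bigr],
$$
where $s:=|x-\zeta|^2$ and $L:=2(1-\alpha)|x|^2+(n-\alpha)(1-|x|^2)>0$.

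By rotational invariance we may take $x=re_1$ and $\zeta=(\cos\theta,\sin\theta,0,\ldots,0)$, so that $s$ traces out $[(1-r)^2,(1+r)^2]$ as $\theta$ ranges over $[0,\pi]$. The problem thereby reduces to a one-variable optimization of the rational function $s\mapsto s^{-(n-\alpha+1)}[2(1-\alpha)Ls+(n-\alpha)(n+\alpha-2)(1-r^2)^2]$ on this interval. The dichotomy of the theorem is driven by the sign of $n+\alpha-2$, which changes at $\alpha=2-n$: in case~(1), $2-n\le\alpha<1$, the coefficient $(n-\alpha)(n+\alpha-2)$ is nonnegative, and a direct monotonicity argument places the extremum at $s=(1-r)^2$, i.e.\ at $\zeta=x/|x|$; in case~(2), $\alpha<2-n$, the analogous sign analysis places the extremum at $s=(1+r)^2$, i.e.\ at $\zeta=-x/|x|$. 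Substituting these extremal $\zeta$ back into the bracketed vector, the two summands are anti-parallel and combine with cancellation to give a vector of norm $(1\mp r)^2\,[(n-\alpha)\pm(n+\alpha-2)r]$; regrouping the remaining factors via $(1-r^2)^{-\alpha}=(1-r)^{-\alpha}(1+r)^{-\alpha}$ then delivers the two stated bounds.

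Sharpness at $x=0$ is immediate: direct evaluation gives $\nabla_xP_\alpha(0,\zeta)=(n-\alpha)C_{n,\alpha}\,\zeta$, whose Euclidean norm $(n-\alpha)C_{n,\alpha}$ is independent of $\zeta$ and matches both formulas at $|x|=0$, and the inequality is saturated in the limit by $L^{1}$-normalized boundary data concentrating at any fixed $\zeta_0\in\mathbb{S}^{n-1}$. The main obstacle is the one-variable optimization: one must carefully track the sign of the derivative of the rational function above and verify that no interior critical point competes with the boundary extrema, a verification whose delicate dependence on the sign of $n+\alpha-2$ is precisely what produces the two cases of the theorem.
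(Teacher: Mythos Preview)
Your reduction to a one-variable problem is correct and, for case~(1), cleaner than the paper's argument: the observation that $|\nabla_xP_\alpha(x,\zeta)|^2$ depends on $\zeta$ only through $s=|x-\zeta|^2$ is a genuine simplification over the paper's route through directional derivatives $\langle\nabla P_\alpha,l\rangle$ followed by Cauchy--Schwarz in $l$. When $2-n\le\alpha<1$ your monotonicity claim is right: $F'(s)$ is (up to a positive factor) $-\bigl[2(1-\alpha)Ls+(n-\alpha+1)(n+\alpha-2)(1-r^2)^2\bigr]$, and both bracketed terms are nonnegative, so the maximum is at $s=(1-r)^2$.

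The gap is in case~(2). When $\alpha<2-n$ the second term in the bracket above becomes negative, but the first term $2(1-\alpha)Ls$ stays positive, so the bracket has a positive root $s^\ast$ and $F$ is \emph{unimodal} (increasing then decreasing), not monotone. In particular the maximum is not at $s=(1+r)^2$: for instance with $n=3$, $\alpha=-2$, $r=1/2$ one has $s^\ast\approx 0.107<(1-r)^2=0.25$, so $F$ is decreasing on the whole interval and the maximum sits at the \emph{left} endpoint, directly contradicting your claim. (Note also that at $\zeta=-x/|x|$ the two summands in the bracketed vector are parallel, not anti-parallel, and substitution yields a denominator $(1+r)^n$ rather than $(1-r)^n$, so even the formal output does not match the stated bound.) The paper does not attempt to locate the true maximiser in this regime; instead it first throws away part of the denominator via $|x-\zeta|^{n-\alpha}\ge(1-r)^{n-\alpha}$ and only then optimises the remaining quotient $K_\alpha(\cos\theta)$, obtaining a valid but non-sharp bound --- which is exactly why the theorem asserts sharpness for $\alpha<2-n$ only at $x=0$. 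Your approach can be salvaged by inserting a similar sacrificial step before the one-variable optimisation.
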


	Next, we discuss an issue that is related to a classical result in geometric function theory:
	the theorem of Landau.
	Recall that Landau's theorem says that if $f$ is analytic in $\mathbb{D}$, with $f(0)=f'(0)-1=0$
	and $|f(z)|<M$ for some $M\geq1$, then $f$ is univalent in the disk $\mathbb{D}(\rho)=\{z\in \mathbb{C}:|z|<\rho\} $ with
	$$\rho=\frac{1}{M+\sqrt{M^{2}-1}}>0.$$ In addition, the range $f\big(\mathbb{D}(\rho)\big)$ contains a
	disk of radius $M\rho^{2}$ (cf. \cite{ch}).
	There have been a number of articles dealing with Landau's theorem for planar harmonic mappings  (cf. \cite{allu, cg, dorff, gri06,liu2009}).
	See also \cite{cz, lip, luo}  for similar discussions in this line.
	Recently,  Khalfallah et al. \cite[Theorem 7]{KMM2021} obtained   a Landau theorem for $\alpha$-harmonic mappings in   $\mathbb{D}$.
	As the second aim of this paper, we consider Landau's theorem for $\alpha$-harmonic mappings in $\mathbb{B}^{n}$.
	
	Before the statement  of our third main result, for convenience, we introduce
	the following notational conventions.
	
	For a mapping $u\in C^{1}(\mathbb{B}^{n},\mathbb{R}^{n})$, we use $J_{u}(x)$ to denote
	the Jacobian of $u$ at the point $x$, i.e., $$J_{u}(x)=\text{det}\big(Du(z)\big).$$
	
	For any $r\in[0,1)$ and $\alpha<1$, let
	\begin{align}\label{eq-1.6}
		&G (r)
		=2(1-\alpha) C_{n,\alpha}  \max_{0\leq |x|\leq r }\left\{{}_{2}F_{1}\left(\frac{\alpha}{2} , \frac{n+\alpha}{2}-1; \frac{n}{2} ;|x|^{2}\right)\right\}\frac{ 1}{(1-r^{2})} \\\nonumber
		& +(n-\alpha) C_{n,\alpha} \max_{0\leq|x|\leq r }\left\{{}_{2}F_{1}\left(\frac{\alpha}{2}-1 , \frac{n+\alpha}{2}-2; \frac{n}{2} ;|x|^{2}\right)\right\}\frac{ 1}{    (1-r^{2})^{2}}   \\
		& +(n-\alpha) C_{n,\alpha} \max_{0\leq|x|\leq r }\left\{ {}_{2}F_{1}\left(\frac{\alpha}{2}-1 , \frac{n+\alpha}{2}-2; \frac{n}{2} ;|x|^{2}\right)g(|x|) \right\}\frac{1}{(1-r^{2})^{3-\alpha }}\nonumber
	\end{align}
	and set
	\begin{align}\label{eq-1.7}
		g(r ) =\left\{\begin{array}{ll}
			\big( 2(1 +r^{2})^{\frac{n+2-\alpha}{2}}
			-(1-r )^{n+2-\alpha  }   -(1-r^{2})^{ 1-\alpha}\big)/r, & \text{ if} \;r\in(0,1), \\
			n+2-\alpha,& \text{ if}\; r=0.
		\end{array}\right.
	\end{align}
	
	%Now, we are ready to state   our third main result.
	
	\begin{thm}\label{thm-1.3}
		Let $\alpha<1$, $u=P_{\alpha}[\phi]$ in $\mathbb{B}^{n}$ with $\|\phi\|_{L^{\infty}(\mathbb{S}^{n-1},\mathbb{R}^{n})}\leq M$, where $M$ is a positive constant.
		If $ u(0)=J_{u}(0)-1=0$,
		then  $u $
		is univalent in $\mathbb{B}^{n}(r_{0})$
		and $u(\mathbb{B}^{n}(r_{0}))$
		contains a ball  $\mathbb{B}^{n}( R_{0})$, where
		$$R_{0}=\frac{M}{2} r^{2}_{0}G(r_{0})$$
		and $r_{0}$ is   the smallest
		positive root of the equation
		$$
		M^{n}rG(r)
		= \left(\frac{ \sqrt{\pi}  \Gamma \left(\frac{n+1}{2}\right)}{(n-\alpha)  C_{n,\alpha}  \Gamma\left(\frac{n}{2}\right)} \right)^{n-1}.
		$$
		
	\end{thm}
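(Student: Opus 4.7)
The strategy follows the classical Landau scheme: bound $\|Du(0)\|$ from above via a Schwarz--Pick estimate, use $J_u(0)=1$ to extract a matching lower bound on $l(Du(0))$, and control $\|Du(z)-Du(0)\|$ uniformly for $|z|\le r$ in terms of $G(r)$. Starting at the origin, I apply \eqref{eq-1.4} componentwise to each coordinate $u_j=P_\alpha[\phi_j]$ with $p=\infty$ and $q=1$, which gives
$$\|Du(0)\|\le (n-\alpha)\,C_{n,\alpha}\,\frac{\Gamma(n/2)}{\sqrt{\pi}\,\Gamma((n+1)/2)}\,M.$$
Combining this with the elementary inequality $|\det A|\le l(A)\,\|A\|^{n-1}$ and the normalization $\det Du(0)=J_u(0)=1$ gives
$$l(Du(0))\ge \|Du(0)\|^{-(n-1)}\ge \left(\frac{\sqrt{\pi}\,\Gamma((n+1)/2)}{(n-\alpha)\,C_{n,\alpha}\,\Gamma(n/2)\,M}\right)^{n-1},$$
and by the defining equation of $r_0$ the right-hand side equals $M r_0 G(r_0)$, so $l(Du(0))\ge M r_0 G(r_0)$.

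The technical heart of the argument is the uniform bound
$$\|Du(z)-Du(0)\|\le M|z|\,G(|z|),\qquad z\in\mathbb{B}^n.$$
I differentiate $u=P_\alpha[\phi]$ under the integral, write the kernel difference as $\nabla_x P_\alpha(z,\zeta)-\nabla_x P_\alpha(0,\zeta)=\int_0^1\tfrac{d}{dt}\nabla_x P_\alpha(tz,\zeta)\,dt$, and expand the $t$-derivative via the explicit identity
$$\nabla_x P_\alpha(x,\zeta)=P_\alpha(x,\zeta)\left(-\frac{2(1-\alpha)x}{1-|x|^2}-\frac{(n-\alpha)(x-\zeta)}{|x-\zeta|^2}\right).$$
This decomposes $D_x^2 P_\alpha(x,\zeta)$ into three natural pieces: the derivative of $\frac{2(1-\alpha)x}{1-|x|^2}$ contributes the $(1-r^2)^{-1}$-term of $G$, the derivative of $\frac{x-\zeta}{|x-\zeta|^2}$ contributes the $(1-r^2)^{-2}$-term, and the cross term (where the differentiation falls on both $(1-|x|^2)^{1-\alpha}$ and $|x-\zeta|^{-(n-\alpha)}$) yields the $g$-dependent piece with exponent $3-\alpha$; the explicit form \eqref{eq-1.7} of $g$ comes from the exact evaluation of the $t$-integral of this cross term. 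The remaining spherical integrals $\int_{\mathbb{S}^{n-1}}|x-\zeta|^{-s}\,d\sigma(\zeta)$ reduce, by the standard zonal-harmonic expansion, to the hypergeometric functions ${}_2F_1(a,b;n/2;|x|^2)$ appearing in \eqref{eq-1.6}. Using $|\phi|\le M$ and the fact that $G$ is non-decreasing on $[0,1)$ (visible from its definition as a maximum multiplied by increasing powers of $1/(1-r^2)$), the pointwise bound extends to $\|Du(z)-Du(0)\|\le M r G(r)$ for every $|z|\le r<1$.

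With both inputs in hand, the rest is routine. For distinct $x,y\in\mathbb{B}^n(r_0)$, setting $\rho=\max(|x|,|y|)<r_0$,
$$|u(x)-u(y)|\ge l(Du(0))|x-y|-|x-y|\max_{|z|\le\rho}\|Du(z)-Du(0)\|\ge\bigl(l(Du(0))-M\rho G(\rho)\bigr)|x-y|>0,$$
which proves univalence on $\mathbb{B}^n(r_0)$. For the covering, taking $y=0$ and $|x|=r_0$ and using $u(0)=0$ together with $\|Du(tx)-Du(0)\|\le M t r_0 G(r_0)$,
$$|u(x)|\ge l(Du(0))r_0-\int_0^1 M t r_0 G(r_0) \cdot r_0\,dt = l(Du(0))r_0-\tfrac{M}{2}r_0^2 G(r_0)\ge\tfrac{M}{2}r_0^2 G(r_0)=R_0,$$
and a standard degree-theoretic argument then yields $u(\mathbb{B}^n(r_0))\supset\mathbb{B}^n(R_0)$. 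The principal obstacle is the derivation of the pointwise kernel estimate in the second paragraph: tracking the exact hypergeometric coefficients and the precise form of the cross-term function $g$ through the double differentiation of the Poisson-Szeg\"o kernel, the $t$-integration, and the spherical integrations requires careful bookkeeping, although all the individual ingredients are available from the computations already carried out for Theorems~\ref{thm-1.1}--\ref{thm-1.2}.
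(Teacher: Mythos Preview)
Your overall architecture---bound $\|Du(0)\|$ via \eqref{eq-1.4}, convert to a lower bound on $l(Du(0))$ through $J_u(0)=1$, estimate $\|Du(x)-Du(0)\|$ by $M|x|G(r)$, then run the standard line-integral univalence and covering arguments---matches the paper exactly, and your handling of the injectivity and covering steps is correct.

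The gap is in the middle paragraph. Your proposed route to the kernel estimate (write the gradient difference as $\int_0^1\frac{d}{dt}\nabla_xP_\alpha(tz,\zeta)\,dt$ and analyze $D_x^2P_\alpha$) is \emph{not} how the bound $\|Du(x)-Du(0)\|\le M|x|G(r)$ with the specific $G$ of \eqref{eq-1.6}--\eqref{eq-1.7} is obtained, and your description would not reproduce that particular $G$. The paper never takes a second derivative or a $t$-integral. Instead it writes out $\nabla_xP_\alpha(0,\zeta)-\nabla_xP_\alpha(x,\zeta)$ directly and groups the $\langle\zeta,\eta\rangle$-pieces, arriving at the three-term inequality
\[
\|Du(0)-Du(x)\|\le M(n-\alpha)C_{n,\alpha}\!\int_{\mathbb{S}^{n-1}}\!\Big|1-\tfrac{(1-|x|^2)^{1-\alpha}}{|x-\zeta|^{n+2-\alpha}}\Big|\,d\sigma
+\;\text{(two explicit $|x|$-terms)}.
\]
The function $g$ in \eqref{eq-1.7} does not come from any ``exact evaluation of the $t$-integral of a cross term'': it arises from a pointwise bound on $\big||x-\zeta|^{n+2-\alpha}-(1-|x|^2)^{1-\alpha}\big|$ obtained by expanding $(1-2\langle x,\zeta\rangle+|x|^2)^{(n+2-\alpha)/2}$ and $(1-|x|^2)^{1-\alpha}$ as binomial series, applying the triangle inequality term by term, and resumming. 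That is precisely what produces the combination $2(1+|x|^2)^{(n+2-\alpha)/2}-(1-|x|)^{n+2-\alpha}-(1-|x|^2)^{1-\alpha}$ in \eqref{eq-1.7}. The two remaining $|x|$-terms and the bound just derived are then integrated over $\mathbb{S}^{n-1}$ using $\int_{\mathbb{S}^{n-1}}|x-\zeta|^{-2\lambda}d\sigma={}_2F_1(\lambda,\lambda-\tfrac{n}{2}+1;\tfrac{n}{2};|x|^2)$ together with an Euler transformation, which is what generates the three hypergeometric factors and the powers of $(1-r^2)$ in \eqref{eq-1.6}.

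A Hessian-based mean-value argument as you describe could certainly produce \emph{some} Landau-type estimate, but the constants and the auxiliary function you would get are not the $G$ and $g$ in the statement; in particular the $(1-|x|)^{n+2-\alpha}$ piece of $g$ has no natural origin as a $t$-antiderivative. Since the theorem pins down $r_0$ and $R_0$ through this specific $G$, you need to follow the direct-difference / series-expansion route (or otherwise show your alternative yields exactly \eqref{eq-1.6}--\eqref{eq-1.7}, which it will not).
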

	
	The rest of this paper is organized as follows.
	In Sections  2 and 3, we present the proofs of Theorems 1.1 and 1.2, respectively.
	In  Section 4,   we give the proof of Theorem 1.3.
	
	\section{Proof of Theorem \ref{thm-1.1}}

	The aim of this section is to prove Theorem \ref{thm-1.1}.
	First, we introduce some necessary terminology.
	Then, we prove some auxiliary results.
	
	Let $p \in[1, \infty]$ and  $q$  be its conjugate. Assume that $ u=P_{\alpha}[\phi]$, where $\phi \in L^{p}\left(\mathbb{S}^{n-1}, \mathbb{R}\right)$.
	For any $x \in \mathbb{B}^{n} $  and  $l \in \mathbb{S}^{n-1}$, let $\mathbf{C}_{\alpha,q}(x)$  and  $\mathbf{C}_{\alpha,q}(x ; l)$  denote the optimal numbers such that
	\begin{equation}\label{eq-2.1}
		|\nabla u(x)| \leq\mathbf{C}_{\alpha, q}(x)\|\phi\|_{L^p(\mathbb{S}^{n-1}, \mathbb{ R} )}
		\;\;
		\mbox{and}
		\;\;
		|\langle\nabla u(x), l\rangle| \leq\mathbf{C}_{\alpha, q}(x ; l)\|\phi\|_{L^p(\mathbb{S}^{n-1}, \mathbb{ R} )}.
	\end{equation}
	Since  $|\nabla u(x)|=\sup _{l \in \mathbb{S}^{n-1}}|\langle\nabla u(x), l\rangle|$,   we obtain
	\begin{equation}\label{eq-2.2}
		\mathbf{C}_{\alpha, q}(x)=\sup _{l \in \mathbb{S}^{n-1}}\mathbf{C}_{\alpha, q}(x ; l).
	\end{equation}
	In \cite{Liu21B}, Liu showed that $\mathbf{C}_{0,  1}(x)=\mathbf{C}_{0, 1}\left(x ; n_{x}\right)$ for any $x\in \mathbb{B}^{n}$,
	where $n_{x}$ is the unit vector such that $x=|x|n_{x} $.
	Later, Khalfallah et al. \cite{KHM2022} proved that
	$$\mathbf{C}_{2-n,  q}(x)=\begin{cases}
		\displaystyle \; \mathbf{C}_{2-n,  q} (x ; n_{x} ), & \text{ if } q\in( \frac{n}{n-1} ,\infty),\\
		\displaystyle \; \mathbf{C}_{2-n,  q} (x ; t_{x} ),  \;\;\;\;& \text{ if } q\in(1,\frac{n}{n-1} ),
	\end{cases}$$
	$\mathbf{C}_{2-n,  \frac{n}{n-1}}(x)\equiv\mathbf{C}_{2-n,  \frac{n}{n-1}}\left(x ; l\right)$
	and
	$\mathbf{C}_{2-n,  1}(x)\equiv\mathbf{C}_{2-n,  1}\left(x ; l\right)$
	for any $l\in  \mathbb{S}^{n-1}$,
	where $t_{x} $ is any unit vector such that $\langle t_{x},n_{x}\rangle =0$.
	See   \cite{chen2023B, k2017,K1992,KM2010A, M2017,P2019}
	and references therein for more discussion along this line.

	\subsection{Several Lemmas}
	\begin{lem}\label{lem-2.1}
		For any $\alpha<1$, $q \in[1, \infty)$, $l \in \mathbb{S}^{n-1}$ and  $x \in \mathbb{B}^{n}$, we have
		\begin{align*}
			&\mathbf{C}_{\alpha, q}(x ; l)\\
			&=\frac{(n-\alpha)C_{n,\alpha}}{\left(1-|x|^{2}\right)^{\frac{n(q-1)+1}{q}}} \left(\int_{\mathbb{S}^{n-1}}|\eta-x|^{( n-\alpha)q-2n+2}\bigg|\left\langle\eta+\frac{2-\alpha-n}{n-\alpha}x, l\right\rangle\bigg|^{q} d \sigma(\eta)\right)^{\frac{1}{q}}.
		\end{align*}
	\end{lem}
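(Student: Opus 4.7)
My plan is three-fold: (a) differentiate the Poisson--Szeg\"{o} representation under the integral sign to obtain a closed-form expression for $\nabla_x P_\alpha(x,\zeta)$; (b) apply the sharp form of H\"{o}lder's inequality to identify $\mathbf{C}_{\alpha,q}(x;l)^q$ with $\int_{\mathbb{S}^{n-1}}|\langle\nabla_x P_\alpha(x,\zeta),l\rangle|^q\,d\sigma(\zeta)$; and (c) convert this integral into the claimed form via a M\"{o}bius-type change of variables on the sphere.

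Step (a) is a direct computation: applying the product and chain rules to $P_\alpha(x,\zeta)=C_{n,\alpha}(1-|x|^2)^{1-\alpha}|x-\zeta|^{-(n-\alpha)}$ and pulling out the common factor $(n-\alpha)C_{n,\alpha}/\bigl((1-|x|^2)^\alpha|x-\zeta|^{n-\alpha+2}\bigr)$ yields
\[
\langle\nabla_x P_\alpha(x,\zeta),l\rangle=\frac{(n-\alpha)C_{n,\alpha}}{(1-|x|^2)^\alpha|x-\zeta|^{n-\alpha+2}}\left[(1-|x|^2)\langle\zeta-x,l\rangle-\tfrac{2(1-\alpha)}{n-\alpha}|x-\zeta|^2\langle x,l\rangle\right].
\]
Step (b) is standard: H\"{o}lder with conjugate exponents $p,q$ gives $|\langle\nabla u(x),l\rangle|\le\|\phi\|_{L^p}\|\langle\nabla_x P_\alpha(x,\cdot),l\rangle\|_{L^q}$, and equality is attained by the canonical extremizer $\phi=c\,\mathrm{sgn}(\langle\nabla_x P_\alpha(x,\cdot),l\rangle)\cdot|\langle\nabla_x P_\alpha(x,\cdot),l\rangle|^{q-1}$, normalized to unit $L^p$-norm, so $\mathbf{C}_{\alpha,q}(x;l)^q=\|\langle\nabla_x P_\alpha(x,\cdot),l\rangle\|_{L^q(\mathbb{S}^{n-1})}^q$.

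For step (c), introduce the involutive spherical inversion
\[
\eta=\Phi(\zeta):=x-\frac{(1-|x|^2)(\zeta-x)}{|\zeta-x|^2},
\]
i.e., the M\"{o}bius inversion centered at $x/|x|^2$ that fixes $\mathbb{S}^{n-1}$ and sends $x\mapsto 0$. One checks that $|\eta-x|=(1-|x|^2)/|\zeta-x|$ and $d\sigma(\eta)=\bigl((1-|x|^2)/|\zeta-x|^2\bigr)^{n-1}d\sigma(\zeta)$, and a direct substitution produces the key algebraic identity
\[
\left\langle\eta+\tfrac{2-\alpha-n}{n-\alpha}x,\,l\right\rangle|\zeta-x|^2=\tfrac{2(1-\alpha)}{n-\alpha}|\zeta-x|^2\langle x,l\rangle-(1-|x|^2)\langle\zeta-x,l\rangle,
\]
whose absolute value coincides with that of the bracket in (a). Plugging $\eta=\Phi(\zeta)$ into the integral from (b) and collecting powers of $(1-|x|^2)$ and $|\zeta-x|$ yields the exponents $(n-\alpha)q-2n+2$ on $|\eta-x|$ and $n(q-1)+1$ on $(1-|x|^2)$ appearing in the statement.

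The only delicate point is the algebraic identity in (c): the coefficient $(2-\alpha-n)/(n-\alpha)$ is forced by the identity $(n-\alpha)+(2-\alpha-n)=2(1-\alpha)$, so that expanding $\eta$ yields $\eta+\tfrac{2-\alpha-n}{n-\alpha}x=\tfrac{2(1-\alpha)}{n-\alpha}x-(1-|x|^2)(\zeta-x)/|\zeta-x|^2$, from which the identity follows by taking the inner product with $l$ and multiplying by $|\zeta-x|^2$. Given this identity, the rest of the argument is routine bookkeeping of exponents in the change of variables.
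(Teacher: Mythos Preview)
Your proposal is correct and follows precisely the route that the paper's cited reference \cite{chen2023B} takes (gradient formula + sharp H\"older + the M\"obius inversion $\eta=\Phi(\zeta)$ on $\mathbb{S}^{n-1}$); the paper itself omits the argument and simply points to that reference with $P_\alpha$ in place of the hyperbolic Poisson kernel, so there is nothing further to compare.
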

	\begin{proof}
		The proof of this result is similar to that of \cite[Lemma 2.1]{chen2023B}, where $P_{\alpha}$ is used
		instead of $\mathcal{P}_{\mathbb{B}^{n}}$,  and we omit it.
	\end{proof}

	For any $q \in[1, \infty)$, $l \in \mathbb{S}^{n-1}$ and $x \in \mathbb{B}^{n}$, set
	\begin{equation}\label{eq-2.3}
		C_{\alpha, q}(x ; l)=\int_{\mathbb{S}^{n-1}}|\eta-x|^{( n-\alpha)q-2n+2}\left|\left\langle\eta+\frac{2-\alpha-n}{n-\alpha}x, l\right\rangle\right|^{q} d \sigma(\eta).
	\end{equation}	
	It follows from Lemma  \ref{lem-2.1} that
	\begin{equation}\label{eq-2.4}
		\mathbf{C}_{\alpha, q}(x ; l) =\frac{(n-\alpha)C_{n,\alpha}}{\left(1-|x|^{2}\right)^{\frac{n(q-1)+1}{q}}}C^{\frac{1}{q}}_{\alpha, q}(x ; l).
	\end{equation}	
	
	In the following, we will estimate the quantity $C_{\alpha, q}(x ; l)$.
	
	\begin{lem}\label{lem-2.2}
		For any $\alpha<1$, $ q \in[1, \infty)$ and  $x \in \mathbb{B}^{n}$, we have
		\begin{equation}\label{eq-2.5}
			C_{\alpha, q}(x ; l)\leq I(\alpha,q,x,l)+  J(\alpha,q,x ),
		\end{equation}
		where
		\begin{equation}\label{eq-2.6}
			I(\alpha,q,x,l)=\int_{\mathbb{S}^{n-1}}|\eta-x|^{( n-\alpha)q-2n+2}|\langle\eta, l\rangle|^{q} d \sigma(\eta)
		\end{equation}
		and
		\begin{align}\label{eq-2.7}
			J(\alpha,q,x )
			=& q |x|\frac{\left |2-\alpha-n\right| }{n-\alpha}\left( 1+ \frac{\left| 2-\alpha-n\right|}{n-\alpha}   |x| \right)^{q-1}\notag\\
			&\times\; _{2}F_{1}\left(n-1-\frac{q(n-\alpha)}{2}, \frac{n-q(n-\alpha)}{2},\frac{n}{2},|x|^{2}\right).
		\end{align}

	\end{lem}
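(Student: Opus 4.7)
The plan is to expand the integrand of $C_{\alpha,q}(x;l)$ by separating the $\eta$-dependent and $x$-dependent parts of the inner product and then to recognize the leftover spherical integral as a known hypergeometric integral.

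First, set $c=\frac{2-\alpha-n}{n-\alpha}$, so the relevant inner product becomes $\langle\eta,l\rangle+c\langle x,l\rangle$. I would invoke the elementary pointwise inequality
\[
|a+b|^{q}\leq |a|^{q}+q|b|(|a|+|b|)^{q-1}\qquad (q\ge 1),
\]
which follows from the mean value theorem applied to $t\mapsto t^{q}$ on the interval between $|a|$ and $|a|+|b|$ together with $||a+b|-|a||\leq|b|$. Taking $a=\langle\eta,l\rangle$ and $b=c\langle x,l\rangle$ and using the Cauchy-Schwarz bounds $|\langle\eta,l\rangle|\leq 1$ and $|\langle x,l\rangle|\leq|x|$ (since $l,\eta\in\mathbb{S}^{n-1}$), and noting that $n-\alpha>0$ so $|c|=|2-\alpha-n|/(n-\alpha)$, this yields
\[
\big|\langle\eta,l\rangle+c\langle x,l\rangle\big|^{q}\leq |\langle\eta,l\rangle|^{q}+q|x|\frac{|2-\alpha-n|}{n-\alpha}\left(1+\frac{|2-\alpha-n|}{n-\alpha}|x|\right)^{q-1}.
\]

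Substituting into the definition \eqref{eq-2.3} of $C_{\alpha,q}(x;l)$ immediately splits the integral into the desired $I(\alpha,q,x,l)$ (see \eqref{eq-2.6}) plus a constant factor multiplying
\[
\int_{\mathbb{S}^{n-1}} |\eta-x|^{(n-\alpha)q-2n+2}\, d\sigma(\eta).
\]
The remaining step is to identify this spherical integral with the hypergeometric factor appearing in \eqref{eq-2.7}. For this I would apply the classical identity
\[
\int_{\mathbb{S}^{n-1}}|\eta-x|^{-2s}\,d\sigma(\eta)={}_{2}F_{1}\!\left(s,\,s-\tfrac{n}{2}+1;\tfrac{n}{2};|x|^{2}\right),\qquad |x|<1,
\]
with $s=n-1-\frac{q(n-\alpha)}{2}$, which produces exactly ${}_{2}F_{1}\bigl(n-1-\frac{q(n-\alpha)}{2},\frac{n-q(n-\alpha)}{2};\frac{n}{2};|x|^{2}\bigr)$. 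Collecting the factors then gives the inequality $C_{\alpha,q}(x;l)\leq I(\alpha,q,x,l)+J(\alpha,q,x)$.

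The main technical point to be careful about is the pointwise inequality $|a+b|^{q}\leq|a|^{q}+q|b|(|a|+|b|)^{q-1}$: it is elementary but must be verified in the endpoint case $q=1$ (where it reduces to the triangle inequality) and checked to be sharp enough that the resulting $J(\alpha,q,x)$ is independent of $l$, which is exactly what is required so that the bound on $\sup_{l}C_{\alpha,q}(x;l)$ in Theorem \ref{thm-1.1} decouples into the sup of $I$ plus $J$. The hypergeometric integral evaluation is a standard computation (expand $|\eta-x|^{-2s}=(1-2\langle\eta,x\rangle+|x|^{2})^{-s}$ as a Gegenbauer series and integrate termwise), so no new ideas are needed there.
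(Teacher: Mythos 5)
Your proof is correct and follows essentially the same route as the paper: the authors also derive the pointwise bound $|t+s|^{q}\leq |t|^{q}+q|s|(|t|+|s|)^{q-1}$ from the Lagrange mean value theorem applied to $t\mapsto t^{q}$, bound $|\langle x,l\rangle|$ by $|x|$ and $|\langle\eta,l\rangle|$ by $1$ to make the error term independent of $l$ and $\eta$, and then evaluate $\int_{\mathbb{S}^{n-1}}|\eta-x|^{(n-\alpha)q-2n+2}\,d\sigma(\eta)$ via the same hypergeometric identity (their reference is Lemma 2.1 of Liu--Peng). No substantive differences.
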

	\begin{proof}
		For any $t\in \mathbb{R}$ and $ q \in[1, \infty)$,	let $f(t)=t^{q}$.
		By the Lagrange mean value theorem, we have
		$$
		|f(t+s)|\leq |f(t)|+|s|\max_{\theta\in[0,1]}\big|f' (  t+ \theta s )\big|
		$$
		for any $s\in \mathbb{R}$.
		Therefore,
		\begin{align}\label{eq-2.8}
			&\Bigg|\left \langle   \eta, l\right \rangle + \left \langle  \frac{2-\alpha-n}{n-\alpha}x, l\right \rangle\Bigg|^{q}\\\nonumber
			%&\leq \left|\left \langle   \eta, l\right \rangle\right|^q
			%+ q\left|\left \langle  \frac{2-\alpha-n}{n-\alpha}x, l\right \rangle\right|
			% \left(1+ \frac{|2-\alpha-n|}{n-\alpha}|x|\right)^{q-1}   \\\nonumber
			&\leq \left|\left \langle   \eta, l\right \rangle\right|^q
			+ q |  x |\frac{|2-\alpha-n|}{n-\alpha} \left( 1+ \frac{\left|2-\alpha-n\right|}{n-\alpha}  |x| \right)^{q-1}.
		\end{align}
		Notice that \cite[Lemma 2.1]{Liu04} ensures
		$$
		\int_{\mathbb{S}^{n-1}}|\eta-x|^{( n-\alpha)q-2n+2} d \sigma(\eta) =
		{}_{2}F_{1}\left(n-1-\frac{q(n-\alpha)}{2}, \frac{n-q(n-\alpha)}{2},\frac{n}{2},|x|^{2}\right).
		$$
		Combining this with \eqref{eq-2.3} and \eqref{eq-2.8},
		we see that
		\eqref{eq-2.5} holds true.
		The proof of the lemma is complete.
	\end{proof}
	
	Next, we will calculate the quantity  $\sup_{l\in\mathbb{S}^{n-1}}I(\alpha, q,x , l )$. For this,  we need to consider the following integral
	\begin{equation}\label{eq-2.9}
		\mathcal{I}(A,B; a, b ; \beta)=\int_{-\pi}^{\pi}(A-B\cos\theta)^{a}\left|\cos(\theta-\beta)\right|^{b}d\theta,
	\end{equation}
	where $\beta\in[0,\pi]$ and $ A$, $B$, $a$, $b$ are real numbers such that $0\leq B< A$ and $b>0$.
	It is straightforward to see from the form of $\mathcal{I}(A,B;a, b ; \beta)$  that the mapping
	$\beta\mapsto\mathcal{I}(A,B;a, b ; \beta)$ is $\pi$-periodic and an even function.
	Further, by the proof of \cite[Lemma 5]{KHM2022}, we obtain the following properties of $\mathcal{I}(A,B;a, b ; \beta)$.
	
	\medskip
\noindent\textbf{Lemma A.} Let $\beta\in[0,\pi]$ and $ A$, $B$, $a$, $b\in \mathbb{R}$ with $0\leq B< A$ and $b>0$.
		
		$(1)$ If  $a=0$  or  $a=1$, then  $\beta \mapsto \mathcal{I}(A,B;a, b ; \beta)$  is a constant on $\left[0,  \pi \right]$;
		
		$(2)$  If $a \in(0,1)$, then $\beta \mapsto \mathcal{I}(A,B;a, b ; \beta)$ is increasing on $\left[0, \frac{\pi}{2}\right]$;
		
		$(3)$  If  $a>1$ or $a<0$, then  $\beta \mapsto \mathcal{I}(A,B;a, b ; \beta)$  is decreasing on  $\left[0, \frac{\pi}{2}\right]$.

	For any  $\alpha<1$,  $\beta\in[0,\pi]$, $q \in[1, \infty)$ and  $r,s\in[0,1)$, set
	\begin{equation}\label{eq-2.10}
		\mathcal{J} (\alpha,\beta,q,r, s)=
		\mathcal{I}\left(1+s^{2},2  rs;\frac{\left( n-\alpha\right)q}{2}-n+1, q ; \beta\right).
	\end{equation}

	Obviously,  Lemma A implies the following corollary, which is useful for calculating  $\sup_{l\in\mathbb{S}^{n-1}}I(\alpha, q,x , l )$.
	\begin{cor}\label{cor-2.1}
		Let $\alpha<1$,  $\beta\in[0,\pi]$, $q \in[1, \infty)$ and $r,s\in[0,1)$.
		
		$(1)$  If  $q=\frac{2n-2}{n-\alpha}$  or  $q=\frac{2n}{n-\alpha}$, then $ \beta \mapsto \mathcal{J} (\alpha,\beta,q,r, s)$ is a constant on  $\left[0,  \pi \right]$;
		
		$(2)$If  $\frac{2n-2}{n-\alpha}<q<\frac{2n}{n-\alpha} $, then $ \beta \mapsto \mathcal{J} (\alpha,\beta,q,r, s)$ is  increasing on  $\left[0, \frac{\pi}{2}\right]$  and
		$$
		\max _{\beta \in\left[0,  \pi \right]} \mathcal{J} (\alpha,\beta,q,r, s)= \mathcal{J} \left(\alpha,\frac{\pi}{2},q,r, s\right);
		$$

		$ (3)$ If $q>\frac{2n}{n-\alpha}$  or  $1\leq q<\frac{2n-2}{n-\alpha}$, then $ \beta \mapsto \mathcal{J} (\alpha,\beta,q,r, s)$ is  decreasing on  $\left[0, \frac{\pi}{2}\right]$  and
		$$
		\max _{\beta \in\left[0,  \pi \right]} \mathcal{J} (\alpha,\beta,q,r, s)= \mathcal{J} (\alpha,0,q,r, s).
		$$
	\end{cor}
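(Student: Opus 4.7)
The plan is to recognize that Corollary~\ref{cor-2.1} is essentially a re-parametrization of Lemma~A, so the whole proof amounts to identifying parameters, checking hypotheses, and then upgrading the monotonicity statement on $[0,\pi/2]$ to a maximum statement on $[0,\pi]$ via symmetry.

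First, I would match the two integrals. Comparing the definition \eqref{eq-2.10} of $\mathcal{J}(\alpha,\beta,q,r,s)$ with \eqref{eq-2.9}, I read off
\[
A=1+s^{2},\qquad B=2rs,\qquad a=\frac{(n-\alpha)q}{2}-n+1,\qquad b=q.
\]
I would then verify that the hypotheses of Lemma~A are met: since $r,s\in[0,1)$, one has $A-B=1-2rs+s^{2}\geq(1-s)^{2}>0$ (and $A-B=1$ if $s=0$), so $0\leq B<A$; and $b=q\geq 1>0$. Hence Lemma~A applies to $\beta\mapsto\mathcal{J}(\alpha,\beta,q,r,s)$.

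Next, I would translate the three regimes of Lemma~A into regimes of $q$ by solving $a=0$, $a=1$, $0<a<1$, $a>1$, $a<0$ respectively for $q$. A direct calculation gives $a=0\Leftrightarrow q=\frac{2n-2}{n-\alpha}$, $a=1\Leftrightarrow q=\frac{2n}{n-\alpha}$, $0<a<1\Leftrightarrow \frac{2n-2}{n-\alpha}<q<\frac{2n}{n-\alpha}$, $a>1\Leftrightarrow q>\frac{2n}{n-\alpha}$, and $a<0\Leftrightarrow q<\frac{2n-2}{n-\alpha}$; combining the last case with $q\geq 1$ yields $1\leq q<\frac{2n-2}{n-\alpha}$. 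Applying Lemma~A in each case gives constancy of $\beta\mapsto\mathcal{J}$ on $[0,\pi]$ (case~1), monotone increase on $[0,\pi/2]$ (case~2), and monotone decrease on $[0,\pi/2]$ (case~3).

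The only remaining point is to pass from the behaviour on $[0,\pi/2]$ to the maximum on $[0,\pi]$ claimed in (2) and (3). Here I would use the two symmetry properties noted just before Lemma~A: $\beta\mapsto\mathcal{I}(A,B;a,b;\beta)$ is $\pi$-periodic and even. From these, $\mathcal{I}(\cdot;\pi-\beta)=\mathcal{I}(\cdot;-\beta)=\mathcal{I}(\cdot;\beta)$, so the function is symmetric about $\pi/2$ on $[0,\pi]$. Hence, in case~(2), since $\mathcal{J}$ is increasing on $[0,\pi/2]$ and symmetric about $\pi/2$, its maximum on $[0,\pi]$ is attained at $\beta=\pi/2$; in case~(3), the decrease on $[0,\pi/2]$ together with the same symmetry places the maximum at $\beta=0$ (and also at $\beta=\pi$). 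Since the translation of parameters is elementary and Lemma~A is already given, I do not expect any serious obstacle; the only thing to be a bit careful about is correctly handling the boundary constraint $q\geq 1$ when splitting the case $a<0$ in~(3).
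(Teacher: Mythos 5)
Your proposal is correct and follows essentially the same route as the paper, which simply asserts that the corollary is an immediate consequence of Lemma~A after the substitution $A=1+s^{2}$, $B=2rs$, $a=\frac{(n-\alpha)q}{2}-n+1$, $b=q$. Your explicit verification of $0\leq B<A$, the translation of the conditions on $a$ into the stated ranges of $q$, and the use of $\pi$-periodicity and evenness to upgrade the monotonicity on $[0,\pi/2]$ to the location of the maximum on $[0,\pi]$ are exactly the steps the paper leaves implicit.
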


	For any $i\in\{1,2,\ldots,n\}$  and $\beta\in[0,\pi]$, we use $e_{i}\in \mathbb{S}^{n-1}$ to denote the unit vector with $i$-th coordinate 1 and
	set $l_{\beta}=\cos \beta e_{1}+\sin \beta e_{2}$.
	Then we have the following result.
	\begin{lem}\label{lem-2.3}
		For any $\alpha<1$, $q \in[1, \infty)$,  $x \in \mathbb{B}^{n}$ and $l\in \mathbb{S}^{n-1}$,  we have
		$$
		I(\alpha, q,x , l )=I\left(\alpha,q,|x| e_{1}, l_{\beta}\right)
		=\frac{n-2}{2 \pi} \int_{0}^{1}\left(1-r^{2}\right)^{\frac{n-4}{2}} r^{q+1} \mathcal{J} (\alpha,\beta,q,r, |x| ) d r,
		$$
		where $\beta\in[0,\pi]$ and $\langle x,l \rangle=|x|\cos \beta$.
	\end{lem}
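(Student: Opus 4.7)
The lemma has two assertions: a rotational reduction to the canonical position $(|x|e_{1}, l_{\beta})$, and an explicit reduction of the spherical integral on $\mathbb{S}^{n-1}$ to a one-dimensional integral involving $\mathcal{J}(\alpha,\beta,q,r,|x|)$. I would prove these in turn; no deep step is required, only careful bookkeeping.

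\emph{Step 1 (reduction by rotation).} Given $x\in\mathbb{B}^{n}\setminus\{0\}$ and $l\in\mathbb{S}^{n-1}$, let $\beta\in[0,\pi]$ be the angle with $\langle x,l\rangle=|x|\cos\beta$. I would produce an orthogonal matrix $U\in O(n)$ with $Ux=|x|e_{1}$ and $Ul=l_{\beta}$. Such $U$ exists because the pairs $(x,l)$ and $(|x|e_{1},l_{\beta})$ share the same norms and the same inner product $|x|\cos\beta$; one constructs an isometry of the $2$-plane $\mathrm{span}(x,l)$ onto the $(e_{1},e_{2})$-plane sending the first pair to the second and extends by any orthogonal map on the orthogonal complement. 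Applying the change of variable $\eta\mapsto U^{T}\eta$ in the defining integral \eqref{eq-2.6}, the invariance of $d\sigma$ under $O(n)$ together with $|U^{T}\eta-x|=|\eta-|x|e_{1}|$ and $\langle U^{T}\eta,l\rangle=\langle\eta,l_{\beta}\rangle$ yields $I(\alpha,q,x,l)=I(\alpha,q,|x|e_{1},l_{\beta})$. The degenerate case $x=0$ follows directly from rotational invariance and the fact that both sides are then independent of $l$ (respectively $\beta$).

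\emph{Step 2 (polar decomposition of $\mathbb{S}^{n-1}$).} Writing $\mathbb{R}^{n}=\mathbb{R}^{2}\oplus\mathbb{R}^{n-2}$, I would parametrize
$$
\eta=\bigl(r\cos\theta,\, r\sin\theta,\, \sqrt{1-r^{2}}\,\omega\bigr),\quad r\in[0,1],\ \theta\in[0,2\pi),\ \omega\in\mathbb{S}^{n-3},
$$
so that the unnormalized surface measure factorizes as $r(1-r^{2})^{(n-4)/2}\,dr\,d\theta\,d\sigma_{\mathbb{S}^{n-3}}(\omega)$. With $x=|x|e_{1}$ and $l=l_{\beta}$, the integrand depends only on $(\eta_{1},\eta_{2})$, via
$$
|\eta-|x|e_{1}|^{2}=1+|x|^{2}-2r|x|\cos\theta,\qquad \langle\eta,l_{\beta}\rangle=r\cos(\theta-\beta).
$$
Integrating out $\omega$ contributes the area $\omega_{n-3}$ of $\mathbb{S}^{n-3}$, and after normalization by $\omega_{n-1}$ (recall $\sigma(\mathbb{S}^{n-1})=1$) one obtains the ratio $\omega_{n-3}/\omega_{n-1}=(n-2)/(2\pi)$, computed from $\omega_{k-1}=2\pi^{k/2}/\Gamma(k/2)$. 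Using $|\eta-x|^{(n-\alpha)q-2n+2}=(|\eta-x|^{2})^{(n-\alpha)q/2-n+1}$, the $\theta$-integral becomes
$$
r^{q}\int_{-\pi}^{\pi}\bigl(1+|x|^{2}-2r|x|\cos\theta\bigr)^{\frac{(n-\alpha)q}{2}-n+1}|\cos(\theta-\beta)|^{q}\,d\theta,
$$
which is exactly $r^{q}\mathcal{J}(\alpha,\beta,q,r,|x|)$ by \eqref{eq-2.9} and \eqref{eq-2.10}. Multiplying by $r(1-r^{2})^{(n-4)/2}$ and the constant $(n-2)/(2\pi)$, and integrating in $r\in[0,1]$, produces the claimed formula.

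\emph{Expected obstacle.} The only delicate point is getting the spherical Jacobian correct: identifying that the polar decomposition adapted to the splitting $\mathbb{R}^{2}\oplus\mathbb{R}^{n-2}$ yields the weight $r(1-r^{2})^{(n-4)/2}$ and the exact normalization constant $(n-2)/(2\pi)$. Once that is set up, recognizing the inner $\theta$-integral as $r^{q}\mathcal{J}(\alpha,\beta,q,r,|x|)$ is immediate from the definitions.
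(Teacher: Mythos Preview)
Your proposal is correct and follows exactly the route the paper has in mind: the paper omits the proof entirely, pointing to \cite[Lemma~4]{KHM2022}, whose argument is precisely the rotational reduction to $(|x|e_{1},l_{\beta})$ followed by the polar decomposition of $\mathbb{S}^{n-1}$ adapted to the $(e_{1},e_{2})$-plane that you describe. Your computation of the normalization constant $\omega_{n-3}/\omega_{n-1}=(n-2)/(2\pi)$ and the identification of the $\theta$-integral with $r^{q}\mathcal{J}(\alpha,\beta,q,r,|x|)$ are both accurate.
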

	
	\begin{proof}
		The proof of this result is similar to that of \cite[Lemma 4]{KHM2022},   and we omit it.
	\end{proof}

	By Lemma \ref{lem-2.3} and Corollary \ref{cor-2.1}, we easily obtain the following corollary.	
	
	\begin{cor}\label{cor-2.2}
		Let  $ \alpha<1$, $q \in[1, \infty)$  and  $x \in \mathbb{B}^{n} $.
		\begin{itemize}
			\item[(1)] If $q=\frac{2n-2}{n-\alpha}$ or  $q=\frac{2n}{n-\alpha}$,
			then  $l \mapsto I\left(\alpha,q,x, l \right)$ is a constant;
			\item[(2)] If  $\frac{2n-2}{n-\alpha}<q<\frac{2n}{n-\alpha} $, then
			$$
			\sup_{l \in\mathbb{S}^{n-1}}I(\alpha, q,x , l )=\max _{\beta \in[0, \pi / 2]} I\left(\alpha,q,|x| e_{1}, l_{\beta}\right)=I\left(\alpha,q,|x| e_{1}, l_{\frac{\pi}{2}}\right);
			$$
			
			\item[(3)] If $1\leq q<\frac{2n-2}{n-\alpha}$ and $q>\frac{2n}{n-\alpha}$, then
			$$
			\sup_{l \in\mathbb{S}^{n-1}}I(\alpha, q,x , l )=\max _{\beta \in[0, \pi / 2]} I\left(\alpha,q,|x| e_{1}, l_{\beta}\right)=I\left(\alpha,q,|x| e_{1}, l_{0}\right).
			$$

		\end{itemize}
	\end{cor}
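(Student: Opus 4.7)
The plan is to combine the rotational/integral reduction of Lemma \ref{lem-2.3} with the monotonicity dichotomy in Corollary \ref{cor-2.1}, and then simply pass the supremum in $l$ inside the $r$-integral.

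First I would use Lemma \ref{lem-2.3} to rewrite, for an arbitrary $l\in\mathbb{S}^{n-1}$,
$$
I(\alpha,q,x,l)=\frac{n-2}{2\pi}\int_{0}^{1}\left(1-r^{2}\right)^{\frac{n-4}{2}}r^{q+1}\,\mathcal{J}(\alpha,\beta,q,r,|x|)\,dr,
$$
where $\beta\in[0,\pi]$ satisfies $\langle x,l\rangle=|x|\cos\beta$. Since (as noted before Lemma A) $\beta\mapsto\mathcal{J}(\alpha,\beta,q,r,s)$ is $\pi$-periodic and even, and Lemma \ref{lem-2.3} already shows $I(\alpha,q,x,l)=I(\alpha,q,|x|e_{1},l_{\beta})$, we have
$$
\sup_{l\in\mathbb{S}^{n-1}}I(\alpha,q,x,l)=\max_{\beta\in[0,\pi/2]}I(\alpha,q,|x|e_{1},l_{\beta}).
$$

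Next I would observe that the weight $(1-r^{2})^{(n-4)/2}r^{q+1}$ is nonnegative on $(0,1)$ (recall $n\ge 3$), so any monotonic behaviour in $\beta$ of the integrand $\mathcal{J}(\alpha,\beta,q,r,|x|)$ on $[0,\pi/2]$ is inherited by the integral $\beta\mapsto I(\alpha,q,|x|e_{1},l_{\beta})$. Feeding the three cases of Corollary \ref{cor-2.1} into this observation yields exactly the three conclusions of the corollary: constancy in $\beta$ (Corollary \ref{cor-2.1}(1)) gives that $l\mapsto I(\alpha,q,x,l)$ is constant, which is case (1); monotone increase on $[0,\pi/2]$ (Corollary \ref{cor-2.1}(2)) locates the maximum at $\beta=\pi/2$, i.e.\ at $l=l_{\pi/2}$, which is case (2); and monotone decrease on $[0,\pi/2]$ (Corollary \ref{cor-2.1}(3)) locates the maximum at $\beta=0$, i.e.\ at $l=l_{0}$, which is case (3).

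There is no substantive obstacle in this argument; the only mildly delicate point is the reduction from an arbitrary $l\in\mathbb{S}^{n-1}$ to $l_{\beta}$ with $\beta\in[0,\pi/2]$. That reduction rests on rotational invariance of $d\sigma$ on $\mathbb{S}^{n-1}$, which lets us rotate $x$ to $|x|e_{1}$ and place $l$ in the two-plane spanned by $e_{1},e_{2}$, followed by the $\pi$-periodicity and evenness of $\mathcal{J}$ in $\beta$. Since Lemma \ref{lem-2.3} already encodes this reduction, the entire argument amounts to pointwise-in-$r$ application of Corollary \ref{cor-2.1}, and the corollary follows.
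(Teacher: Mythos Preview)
Your argument is correct and is precisely the approach the paper takes: the paper simply states that the corollary follows from Lemma~\ref{lem-2.3} and Corollary~\ref{cor-2.1}, and your write-up is a careful fleshing-out of that one-line justification.
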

	
	Now, we will give a very explicit representation of the quantity $\sup_{l\in\mathbb{S}^{n-1}}I(\alpha, q,x , l )$.
	
	\begin{lem}\label{lem-2.4}
		Let  $\alpha<1$, $ q \in[1, \infty)$ and  $x\in\mathbb{B}^{n} $.
		
		$(1)$  If $q=\frac{2n-2}{n-\alpha}$, then
		$$
		\sup_{l \in\mathbb{S}^{n-1}}I(\alpha, q,x , l )=
		\frac{\Gamma(\frac{n}{2}) \Gamma\left(\frac{3 n-\alpha-2}{2 n-2\alpha }\right)}{\sqrt{\pi} \Gamma \left(\frac{n^{2}+2n -\alpha n -2}{2 n-2\alpha }\right) }.
		$$

		$(2)$ If  $q=\frac{2n}{n-\alpha}$, then
		$$\sup_{l \in\mathbb{S}^{n-1}}I(\alpha, q,x , l )= \frac{\Gamma(\frac{n}{2})\Gamma(\frac{3n-\alpha}{2n-2\alpha})}{\sqrt{\pi}\Gamma(\frac{n^{2}+2n -\alpha n }{2n-2\alpha})}(1+|x|^{2}).
		$$

		$(3)$ If 	$\frac{2n-2}{n-\alpha}<q<\frac{2n}{n-\alpha} $, then
		\begin{align*}
			&\sup_{l \in\mathbb{S}^{n-1}}I(\alpha, q,x , l )= I\left(\alpha,q,x,  t_{x}\right)\\
			& =\frac{\Gamma\left(\frac{n}{2}\right)\Gamma\left(\frac{q+1}{2}\right) }{\sqrt{\pi}\Gamma\left(\frac{q+n}{2}\right) }
			{ }_{2} F_{1}\left(n-1-\frac{\left(n-\alpha\right)q}{2}, \frac{n -(n+1-\alpha )q}{2}; \frac{q+n}{2}; |x|^{2}\right)
		\end{align*}
		and
		\begin{align*}
			&\sup _{x \in \mathbb{B}^{n}}\sup_{l \in\mathbb{S}^{n-1}}I(\alpha, q,x , l )\\
			&=\frac{\Gamma\left(\frac{n}{2}\right)\Gamma\left(\frac{q+1}{2}\right) }{\sqrt{\pi}\Gamma\left(\frac{q+n}{2}\right) }{ }_{2} F_{1}\left(n-1-\frac{\left(n-\alpha\right)q}{2}, \frac{n -(n+1-\alpha )q}{2}; \frac{q+n}{2}; 1\right),
		\end{align*}
		where $t_{x}$ is   any unit vector in $\mathbb{R}^{n}$ such that $\langle t_{x}, n_{x}\rangle=0$.
		
		$(4)$   If  $1\leq q<\frac{2n-2}{n-\alpha}$ and $q>\frac{2n}{n-\alpha}$, then
		\begin{align*}
			\sup_{l \in\mathbb{S}^{n-1}}I(\alpha, q,x , l )&= I\left(\alpha,q,x, n_{x}\right)
			=(1+|x|^{2})^{\frac{(n-\alpha)q}{2}-n+1} \frac{\Gamma\left(\frac{n}{2}\right)\Gamma\left(\frac{q+1}{2}\right) }{\sqrt{\pi}\Gamma\left(\frac{q+n}{2}\right) }\times\\
			&{ }_{3} F_{2}\left(\frac{2n-2-( n-\alpha )q}{4},\frac{2n-(n-\alpha)q}{4}, \frac{q+1}{2} ; \frac{1}{2}, \frac{q+n}{2} ; \frac{4|x|^{2}}{(1+|x|^{2})^{2}}\right)
		\end{align*}
		and			\begin{align*}
			\sup _{x \in \mathbb{B}^{n}} \sup_{l \in\mathbb{S}^{n-1}}I(\alpha, q,x , l )=&2^{\frac{(n-\alpha)q}{2}-n+1} \frac{\Gamma\left(\frac{n}{2}\right)\Gamma\left(\frac{q+1}{2}\right) }{\sqrt{\pi}\Gamma\left(\frac{q+n}{2}\right) }\times\\
			&{ }_{3} F_{2}\left(\frac{2n-2-( n-\alpha )q}{4},\frac{2n-(n-\alpha)q}{4}, \frac{q+1}{2} ; \frac{1}{2}, \frac{q+n}{2} ;1\right).
		\end{align*}

	\end{lem}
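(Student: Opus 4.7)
The plan is to pass from the supremum over $l\in\mathbb{S}^{n-1}$ to a concrete integral using Corollary~\ref{cor-2.2}, and then evaluate that integral either directly (parts (1) and (2)) or via the one-dimensional representation of Lemma~\ref{lem-2.3} combined with ultraspherical expansions (parts (3) and (4)). The classical surface-integral identity
$$\int_{\mathbb{S}^{n-1}}|\langle\eta,l\rangle|^{q}\,d\sigma(\eta)=\frac{\Gamma(n/2)\,\Gamma((q+1)/2)}{\sqrt{\pi}\,\Gamma((q+n)/2)}$$
will be the basic ``seed'' to which everything reduces after manipulating the kernel $|\eta-x|^{(n-\alpha)q-2n+2}$.

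For parts (1) and (2), Corollary~\ref{cor-2.2}(1) makes $l\mapsto I(\alpha,q,x,l)$ constant, so one is free to choose any convenient direction. When $q=(2n-2)/(n-\alpha)$ the exponent $(n-\alpha)q-2n+2$ vanishes and $I$ collapses immediately to the above classical integral, giving (1) after substituting for $q$. When $q=2n/(n-\alpha)$ the exponent equals $2$, so $|\eta-x|^{2}=1+|x|^{2}-2\langle\eta,x\rangle$; the cross-term $\int_{\mathbb{S}^{n-1}}\langle\eta,x\rangle|\langle\eta,l\rangle|^{q}\,d\sigma(\eta)$ vanishes by odd symmetry under $\eta\mapsto-\eta$, leaving the factor $(1+|x|^{2})$ times the same classical integral, which is (2).

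For parts (3) and (4), Corollary~\ref{cor-2.2}(2)--(3) isolates the extremal direction: $l=l_{\pi/2}=t_{x}$ in (3) and $l=l_{0}=n_{x}$ in (4). Feeding $\beta=\pi/2$, resp.\ $\beta=0$, into Lemma~\ref{lem-2.3} yields a double integral in $(r,\theta)\in[0,1]\times[-\pi,\pi]$ with kernel $(1+|x|^{2}-2r|x|\cos\theta)^{a}$, where $a=(n-\alpha)q/2-n+1$, weighted by $|\sin\theta|^{q}$ (case (3)) or $|\cos\theta|^{q}$ (case (4)). Expanding the kernel through the generating function for ultraspherical polynomials produces a power series in $\cos\theta$ whose coefficients are Pochhammer ratios in the radial and angular variables. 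In case (3) the factor $|\sin\theta|^{q}$ kills the odd-index terms by parity, so only even powers of $|x|$ survive; the radial integral is then a Beta integral, and after collecting Pochhammer symbols one arrives at the displayed ${}_{2}F_{1}$ in $|x|^{2}$. In case (4) the factor $|\cos\theta|^{q}$ preserves both parities; the resulting double sum consolidates via a quadratic transformation corresponding to the substitution $s=4|x|^{2}/(1+|x|^{2})^{2}$, producing the stated ${}_{3}F_{2}$ together with the prefactor $(1+|x|^{2})^{a}$ coming from the branch of the kernel at $\cos\theta=1$.

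The suprema over $x\in\mathbb{B}^{n}$ in (3) and (4) then follow by monotonicity of the resulting hypergeometric functions in $|x|^{2}$ (either from a direct sign analysis of the Pochhammer coefficients in the relevant $q$-ranges, or by differentiating the defining integral in $|x|$), so the sup is attained in the limit $|x|^{2}\to 1^{-}$; convergence at the boundary value $1$ is secured by the parameter inequality $\sum b_{j}-\sum a_{i}>0$, which one checks to hold whenever $q\geq 1$ and $\alpha<1$. The principal technical obstacle I anticipate is the reduction in case (4): passing from the full double series (in $\cos\theta$ and $r|x|$) to a \emph{single} ${}_{3}F_{2}$ in $4|x|^{2}/(1+|x|^{2})^{2}$ requires carefully tracking the odd/even-index combinatorics and executing the quadratic transformation with its half-integer-shifted Pochhammer symbols. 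Case (3) is considerably easier because parity collapses half of the sum at once.
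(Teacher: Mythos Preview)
Your outline is essentially correct and tracks the paper's proof closely. Parts (1) and (2) are exactly as in the paper. For (3) and (4) there is a small difference of route: rather than passing through the $(r,\theta)$ double integral of Lemma~\ref{lem-2.3} and expanding in ultraspherical polynomials, the paper stays with the sphere integral
\[
\int_{\mathbb{S}^{n-1}}(1+|x|^{2}-2|x|\eta_{1})^{-s}|\eta_{j}|^{q}\,d\sigma(\eta)
\]
($j=2$ in case (3), $j=1$ in case (4)) and evaluates it by the slice formula, citing the analogous computations in \cite[Cases~4.3--4.4]{KHM2022}. Your approach is equivalent but involves an extra radial integration that the direct sphere computation bypasses.

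The one place where your sketch is genuinely thinner than the paper is the monotonicity step for $\sup_{x}$. In case (3) the first upper parameter $n-1-(n-\alpha)q/2$ lies in $(-1,0)$ while the second is also negative, so the Pochhammer products $(a)_{k}(b)_{k}$ change sign as $k$ grows; a ``direct sign analysis of the coefficients'' therefore does \emph{not} give monotonicity, and differentiating the defining integral in $|x|$ produces an integrand $(\,|x|-\eta_{1}\,)$ of indefinite sign. The paper handles this by invoking \cite[Lemma~1.2]{Ol14}. In case (4) the paper splits the two subranges: for $q>\tfrac{2n}{n-\alpha}$ it cites a monotonicity lemma \cite[Lemma~3]{KHM2022}, while for $1\le q<\tfrac{2n-2}{n-\alpha}$ (where both $a_{1},a_{2}>0$) it rewrites the ${}_{3}F_{2}$ through the Euler integral
\[
{}_{3}F_{2}\bigl(a_{1},a_{2},a_{3};\tfrac12,b_{1};r\bigr)=\frac{\Gamma(b_{1})}{\Gamma(a_{3})\Gamma(b_{1}-a_{3})}\int_{0}^{1}t^{a_{3}-1}(1-t)^{b_{1}-a_{3}-1}\,{}_{2}F_{1}\bigl(a_{1},a_{2};\tfrac12;rt\bigr)\,dt
\]
and reads off monotonicity from the positivity of the inner ${}_{2}F_{1}$ coefficients. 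You should be prepared to supply arguments at this level of detail; the generic remarks in your last paragraph will not close the gap on their own.
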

	\begin{proof}
		We divide the proof of the lemma into four cases according to the range of $q$.
		\begin{case}
			Suppose $q=\frac{2n-2}{n-\alpha}$.
		\end{case}
		By \eqref{eq-2.6} and Corollary \ref{cor-2.2}, we see that for any $l \in \mathbb{S}^{n-1}$,
		$$
		I(\alpha, q,x , l  )\equiv I(\alpha, q,x , l_{0} ) =\int_{\mathbb{S}^{n-1}}|\eta_{1}|^{\frac{2n-2}{n-\alpha}}d\sigma(\eta),
		$$
		where $l_{0}=e_{1}$ and $\eta=(\eta_{1},\ldots,\eta_{n})$. Since
		\begin{align*}
			\int_{\mathbb{S}^{n-1}}|\eta_{1}|^{\frac{2n-2}{n-\alpha}}d\sigma(\eta)
			&=\frac{(n-1)\Gamma(\frac{n}{2})}{2\sqrt{\pi}\Gamma\left(\frac{n+1}{2}\right)}\int_{-1}^{1}(1-t^{2})^{\frac{n-3}{2}}|t|^{\frac{2n-2}{n-\alpha}}dt
			\;\;\text{(by \cite[Equation (2.6)]{KHM2022})}\\
			&=\frac{(n-1)\Gamma(\frac{n}{2})}{\sqrt{\pi}\Gamma\left(\frac{n+1}{2}\right)}\int_{0}^{1}(1-t^{2})^{\frac{n-3}{2}}t^{\frac{2n-2}{n-\alpha}}dt\\
			%&=\frac{(n-1)\Gamma(\frac{n}{2})\Gamma\left(\frac{n-1}{2}\right) \Gamma\left(\frac{3 n-\alpha-2}{2(n-\alpha)}\right)}{2\sqrt{\pi}\Gamma\left(\frac{n+1}{2}\right) \Gamma \left(\frac{n^{2}+n(2-\alpha)-2}{2(n-\alpha)}\right) }\\
			&=\frac{\Gamma(\frac{n}{2}) \Gamma\left(\frac{3 n-\alpha-2}{2 n-2\alpha  }\right)}{\sqrt{\pi} \Gamma \left(\frac{n^{2}+n(2-\alpha)-2}{2 n-2\alpha }\right) },\qquad\qquad\quad\qquad\text{(by \cite[Equation (4.8)]{KHM2022})}
		\end{align*}
		we obtain that	
		$$
		I(\alpha, q,x , l  )\equiv
		\frac{\Gamma(\frac{n}{2}) \Gamma\left(\frac{3 n-\alpha-2}{2 n-2\alpha }\right)}{\sqrt{\pi} \Gamma \left(\frac{n^{2}+2n -\alpha n -2}{2 n-2\alpha }\right) }.
		$$
		\begin{case}
			Suppose	 $q=\frac{2n}{n-\alpha}$.
		\end{case}
		It follows from \eqref{eq-2.9} and \eqref{eq-2.10}  that
		\begin{align*}
			\mathcal{J} (\alpha,\beta,q,r, |x| )=\int_{-\pi}^{\pi}(1+|x|^{2}-2r|x| \cos\theta)|\cos(\theta-\beta)|^{q}d\theta.
		\end{align*}
		Then, by Lemma \ref{eq-2.3} and the proof of \cite[Case 4.2]{KHM2022}, we   obtain that for any $l \in \mathbb{S}^{n-1}$,
		$$
		I(\alpha, q,x , l )
		\equiv
		\frac{\Gamma(\frac{n}{2})\Gamma(\frac{q+1}{2})}{\sqrt{\pi}\Gamma(\frac{n+q}{2})}(1+|x|^{2})
		=\frac{\Gamma(\frac{n}{2})\Gamma(\frac{3n-\alpha}{2n-2\alpha})}{\sqrt{\pi}\Gamma(\frac{n^{2}+2n -\alpha n }{2n-2\alpha})}(1+|x|^{2}).
		$$
		
		\begin{case}
			Suppose	$\frac{2n-2}{n-\alpha}<q<\frac{2n}{n-\alpha} $.
		\end{case}
		By \eqref{eq-2.6} and Corollary \ref{cor-2.2}, we know that
		$$
		\sup_{l \in\mathbb{S}^{n-1}}I(\alpha, q,x , l  )
		= I(\alpha, q,|x|e_{1} , l_{ \frac{\pi}{2}} )
		=\int_{\mathbb{S}^{n-1}}(1+|x|^{2}-2|x|\eta_{1} )^{\frac{( n-\alpha)q}{2}- n+1}| \eta_{2}|^{q} d \sigma(\eta),
		$$
		where   $\eta=(\eta_{1},\ldots,\eta_{n})$.
		Let $s=n-1-\frac{( n-\alpha)q}{2} $.
		Using a method similar to that in \cite[Case 4.4]{KHM2022}, we obtain
		$$
		\int_{\mathbb{S}^{n-1}}(1+|x|^{2}-2|x|\eta_{1} )^{-s}| \eta_{2}|^{q} d \sigma(\eta)
		=\frac{\Gamma\left(\frac{n}{2}\right)\Gamma\left(\frac{q+1}{2}\right) }{\sqrt{\pi}\Gamma\left(\frac{q+n}{2}\right) }
		{ }_{2} F_{1}\left(s, s- \frac{q+n}{2}+1; \frac{q+n}{2}; |x|^{2}\right).
		$$
		Therefore,
		\begin{align*}
			\sup_{l \in\mathbb{S}^{n-1}}I(\alpha, q,x , l  )=& \frac{\Gamma\left(\frac{n}{2}\right)\Gamma\left(\frac{q+1}{2}\right) }{\sqrt{\pi}\Gamma\left(\frac{q+n}{2}\right) }\times\\
			&{ }_{2} F_{1} \left(n-1-\frac{( n-\alpha)q}{2}, \frac{n-(n+1-\alpha) q}{2} ; \frac{q+n}{2}; |x|^{2}\right).
		\end{align*}
		Further,   \cite[Lemma 1.2]{Ol14} and \cite[Theorem 2.1.2]{an1999} yield to
		\begin{align*}
			\sup _{x \in \mathbb{B}^{n}}&\sup_{l \in\mathbb{S}^{n-1}}I(\alpha, q,x , l  )\\
			&= \frac{\Gamma\left(\frac{n}{2}\right)\Gamma\left(\frac{q+1}{2}\right) }{\sqrt{\pi}\Gamma\left(\frac{q+n}{2}\right) }  { }_{2} F_{1} \left(n-1-\frac{( n-\alpha)q}{2}, \frac{n-(n+1-\alpha) q}{2} ; \frac{q+n}{2}; 1\right)<\infty .
		\end{align*}
		
		\begin{case}
			Suppose $1\leq q<\frac{2n-2}{n-\alpha}$ or $q>\frac{2n}{n-\alpha}$.
		\end{case}
		
		From \eqref{eq-2.6} and Corollary \ref{cor-2.2}, we see that
		$$
		\sup_{l \in\mathbb{S}^{n-1}}I(\alpha, q,x , l  )
		= I(\alpha, q,|x|e_{1} , l_{0} )
		=\int_{\mathbb{S}^{n-1}}(1+|x|^{2}-2|x|\eta_{1} )^{\frac{( n-\alpha)q}{2}- n+1}| \eta_{1}|^{q} d \sigma(\eta),
		$$
		where   $\eta=(\eta_{1},\ldots,\eta_{n})$.
		Let $w=n-1-\frac{( n-\alpha)q}{2} $.
		By a method similar to that used in \cite[Case 4.3]{KHM2022},
		we get that
		\begin{align*}
			&\int_{\mathbb{S}^{n-1}}(1+|x|^{2}-2|x|\eta_{1} )^{-w}| \eta_{1}|^{q} d \sigma(\eta)
			\\=&(1+|x|^{2})^{-w} \frac{\Gamma\big(\frac{n}{2}\big)\Gamma\big(\frac{q+1}{2}\big) }{\sqrt{\pi}\Gamma\big(\frac{q+n}{2}\big) } { }_{3} F_{2}\left(\frac{w}{2}, \frac{w+1}{2},\frac{q+1}{2}; \frac{1}{2}, \frac{q+n}{2} ; \frac{4|x|^{2}}{(1+|x|^{2})^{2}}\right).
		\end{align*}
		Therefore,
		\begin{align*}
			\sup_{l \in\mathbb{S}^{n-1}}&I(\alpha, q,x , l  )
			=(1+|x|^{2})^{\frac{(n-\alpha)q}{2}-n+1} \frac{\Gamma\left(\frac{n}{2}\right)\Gamma\left(\frac{q+1}{2}\right) }{\sqrt{\pi}\Gamma\left(\frac{q+n}{2}\right) }\\
			&\times{ }_{3} F_{2}\left(\frac{2n-2-( n-\alpha )q}{4}, \frac{2n-(  n-\alpha )q}{4}, \frac{q+1}{2} ; \frac{1}{2}, \frac{q+n}{2} ; \frac{4|x|^{2}}{(1+|x|^{2})^{2}}\right).
		\end{align*}
		
		In the following, we will calculate the quantity $\sup _{x \in \mathbb{B}^{n}} \sup_{l \in\mathbb{S}^{n-1}} I(\alpha, q,x , l  )$.
		
		When $q>\frac{2n}{n-\alpha}$, we see from \cite[Lemma 3]{KHM2022} and \cite[Theorem 2.1.2]{an1999} that
		\begin{align*}
			\sup _{x \in \mathbb{B}^{n}} &\sup_{l \in\mathbb{S}^{n-1}} I(\alpha, q,x , l  )
			=2^{\frac{(n-\alpha)q}{2}-n+1} \frac{\Gamma\left(\frac{n}{2}\right)\Gamma\left(\frac{q+1}{2}\right) }{\sqrt{\pi}\Gamma\left(\frac{q+n}{2}\right) }\\
			&\times{ }_{3} F_{2}\left( \frac{2n-2-( n-\alpha )q}{4}, \frac{2n-(  n-\alpha )q}{4},\frac{q+1}{2} ; \frac{1}{2}, \frac{q+n}{2} ; 1\right)<\infty.
		\end{align*}
		
		Next, we consider the case when	  $1\leq q<\frac{2n-2}{n-\alpha}$.
		For any $r\in[0,1]$, it follows from \cite[Equation (2.2.2)]{an1999} that
		$$
		{ }_{3} F_{2}\left(a_{1}, a_{2}, a_{3} ;  \frac{1}{2}, b_{1} ; r\right)
		=\frac{\Gamma\left(b_{1}\right)}{\Gamma\left(a_{3}\right) \Gamma\left(b_{1}-a_{3}\right)} \int_{0}^{1} t^{a_{3}-1}(1-t)^{-a_{3}+b_{1}-1}{ }_{2} F_{1}\left(a_{1}, a_{2} ; \frac{1}{2} ; r t\right) d t,
		$$
		where
		$$a_{1}=\frac{2n-2-( n-\alpha )q}{4},\;a_{2}=\frac{2n-(  n-\alpha )q}{4},\;a_{3}=\frac{q+1}{2}\;\;\text{and}\;\;b_{1}=\frac{q+n}{2} .$$
		It is easy to see from \eqref{eq-1.1}  that the mapping ${ }_{2} F_{1}\left(a_{1}, a_{2}; \frac{1}{2}; r\right)$  is increasing in $[0,1)$.
		Thus, ${ }_{3} F_{2}\left(a_{1}, a_{2}, a_{3} ;  \frac{1}{2}, b_{1} ; r\right)$ is increasing in $[0,1)$.
		This, together with  \cite[Theorem 2.1.2]{an1999}, guarantees that
		\begin{align*}
			\sup _{x \in \mathbb{B}^{n}}&\sup_{l \in\mathbb{S}^{n-1}} I(\alpha, q,x , l  )
			=2^{\frac{(n-\alpha)q}{2}-n+1} \frac{\Gamma\left(\frac{n}{2}\right)\Gamma\left(\frac{q+1}{2}\right) }{\sqrt{\pi}\Gamma\left(\frac{q+n}{2}\right) }\\
			&\times{ }_{3} F_{2}\left(\frac{2n-2-( n-\alpha )q}{4},\frac{2n-(  n-\alpha )q}{4}, \frac{q+1}{2} ; \frac{1}{2}, \frac{q+n}{2} ; 1\right)
			<\infty.
		\end{align*}
		The proof of the lemma is complete.
	\end{proof}

	\subsection{Proof of Theorem \ref{thm-1.1}}
	For any  $x \in \mathbb{B}^{n}$,
	by \eqref{eq-2.4} and  Lemma \ref{lem-2.2},
	we have
	\begin{align*}
		\mathbf{C}_{\alpha, q}(x ; l)
		\leq\frac{(n-\alpha)C_{n,\alpha}}{\left(1-|x|^{2}\right)^{\frac{n(q-1)+1}{q}}} \big(I(\alpha,q,x,l) +  J(\alpha,q,x )\big)^{\frac{1}{q}},
	\end{align*}
	where  $ J(\alpha,q,x )$ is the mapping from
	\eqref{eq-2.7} and
	the quantity  $\sup_{l \in\mathbb{S}^{n-1}}I(\alpha,q,x,l)$ is obtained in Lemma \ref{lem-2.4}.
	Combining the above inequality with \eqref{eq-2.1} and \eqref{eq-2.2},
	we see that \eqref{eq-1.5}  holds true.
	Further, by \eqref{eq-1.4}, Lemmas \ref{lem-2.2}, \ref{lem-2.4}   and \cite[Theorem 1]{KHM2022},
	we see that \eqref{eq-1.5} is sharp when $x=0$ or $\alpha=2-n$.
	\qed
	
	\section{Proof of Theorem \ref{thm-1.2}}	
	The aim of this section is to prove   Theorem \ref{thm-1.2}.
	In fact, it can be derived directly from   \eqref{eq-2.1}, Lemmas \ref{lem-3.1} and \ref{lem-3.4}.
	
	\begin{lem}\label{lem-3.1}
		For any $\alpha<1$, $ x \in \mathbb{B}^{n}$  and  $l \in \mathbb{S}^{n-1}$, we have
		$$
		\mathbf{C}_{\alpha, \infty}(x ; l)=\sup _{\zeta \in \mathbb{S}^{n-1}}\left|\left\langle\nabla {P}_{\alpha}(x, \zeta), l\right\rangle\right| \text { and } \mathbf{C}_{\alpha, \infty}(0) \equiv \mathbf{C}_{\alpha, \infty}(0 ; l) \equiv (n-\alpha)C_{n,\alpha}.
		$$
	\end{lem}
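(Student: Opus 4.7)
The plan is to exploit the standard $L^1$-$L^\infty$ duality. Since $\phi\in L^1(\mathbb{S}^{n-1},\mathbb{R})$ and the Poisson-Szeg\"o kernel $P_{\alpha}(x,\cdot)$ is $C^\infty$ in $x$ for each fixed $x\in\mathbb{B}^n$ (with $|x-\zeta|\ge 1-|x|>0$ uniformly over $\zeta\in\mathbb{S}^{n-1}$), I would first justify differentiation under the integral sign to obtain
$$
\langle\nabla u(x),l\rangle=\int_{\mathbb{S}^{n-1}}\langle\nabla_{x}P_{\alpha}(x,\zeta),l\rangle\,\phi(\zeta)\,d\sigma(\zeta).
$$
The trivial bound $|\int fg\,d\sigma|\le\|f\|_{L^\infty}\|g\|_{L^1}$ then yields
$$
\mathbf{C}_{\alpha,\infty}(x;l)\le\sup_{\zeta\in\mathbb{S}^{n-1}}\big|\langle\nabla_{x}P_{\alpha}(x,\zeta),l\rangle\big|,
$$
and continuity of $\zeta\mapsto\langle\nabla_{x}P_{\alpha}(x,\zeta),l\rangle$ on the compact sphere guarantees the supremum is attained at some $\zeta^{\ast}\in\mathbb{S}^{n-1}$.

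For sharpness of the constant, I would test against the approximate-delta family
$$
\phi_{\varepsilon}(\zeta)=\mathrm{sign}\big(\langle\nabla_{x}P_{\alpha}(x,\zeta^{\ast}),l\rangle\big)\frac{\chi_{B(\zeta^{\ast},\varepsilon)}(\zeta)}{\sigma(B(\zeta^{\ast},\varepsilon))},
$$
which satisfies $\|\phi_{\varepsilon}\|_{L^1}=1$. Letting $\varepsilon\to 0$ and using the continuity of the kernel gradient shows that $|\langle\nabla u_{\varepsilon}(x),l\rangle|$ converges to $|\langle\nabla_{x}P_{\alpha}(x,\zeta^{\ast}),l\rangle|$, forcing equality in the upper bound and establishing the first claim of the lemma.

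For the evaluation at $x=0$, I would compute $\nabla_{x}P_{\alpha}(x,\zeta)$ from \eqref{eq-1.2} via the product rule:
$$
\nabla_{x}P_{\alpha}(x,\zeta)=C_{n,\alpha}\left[-\frac{2(1-\alpha)(1-|x|^{2})^{-\alpha}x}{|x-\zeta|^{n-\alpha}}-\frac{(n-\alpha)(1-|x|^{2})^{1-\alpha}(x-\zeta)}{|x-\zeta|^{n-\alpha+2}}\right].
$$
Setting $x=0$ kills the first term and uses $|0-\zeta|=1$, giving $\nabla_{x}P_{\alpha}(0,\zeta)=(n-\alpha)C_{n,\alpha}\,\zeta$. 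Hence $\sup_{\zeta\in\mathbb{S}^{n-1}}|(n-\alpha)C_{n,\alpha}\langle\zeta,l\rangle|=(n-\alpha)C_{n,\alpha}$, attained at $\zeta=\pm l$ for every unit vector $l$. Combining with \eqref{eq-2.2} yields $\mathbf{C}_{\alpha,\infty}(0)\equiv\mathbf{C}_{\alpha,\infty}(0;l)\equiv(n-\alpha)C_{n,\alpha}$.

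The only mildly delicate point is the sharpness argument — one must ensure the sign is well-defined near $\zeta^{\ast}$ and that the limit passes through the integral — but since $\nabla_{x}P_{\alpha}(x,\cdot)$ is continuous on the compact sphere for each interior $x$, this is routine and no deeper obstacle arises.
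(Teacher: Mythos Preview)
Your proposal is correct and follows the standard $L^{1}$--$L^{\infty}$ duality argument; this is exactly the method the paper invokes by referring to \cite[Lemma 4.1]{chen2023B} (with $P_{\alpha}$ replacing the hyperbolic Poisson kernel) and then omitting the details. Your explicit computation of $\nabla_{x}P_{\alpha}(0,\zeta)=(n-\alpha)C_{n,\alpha}\,\zeta$ matches the gradient formula the paper records later in \eqref{eq-3.3} and in Section~4, so the evaluation at the origin is consistent with the paper's own calculations.
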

	\begin{proof}
		The proof of this result is similar to that of \cite[Lemma 4.1]{chen2023B}, where $P_{\alpha}$ is used
		instead of $\mathcal{P}_{\mathbb{B}^{n}}$,  and we omit it.
	\end{proof}
	
	\medskip
	
	For any  $\beta \in[0, \pi]$ and $\rho \in[0,1) $, we let
	\begin{align}\label{eq-3.1}
		C_{\alpha, \infty}&  (\rho e_{1} ; l_{\beta} )\notag=\max _{\zeta \in \mathbb{S}^{n-1}} \frac{1}{\left|\rho e_{1}-\zeta\right|^{n+2-\alpha}}\big| (n-\alpha)(1-\rho^{2}) \sin \beta \cdot \zeta_{2} \notag\\
		&- \big((2 -3 \alpha + n) \rho   + (2 - \alpha - n) \rho^3 + (\alpha -
		n + (-4 +3\alpha + n) \rho^2) \zeta_{1} \big) \cos \beta\big|,
	\end{align}
	where $l_{\beta}=\cos \beta \cdot e_{1}+\sin \beta \cdot e_{2}$  and $\zeta=\left(\zeta_{1}, \zeta_{2}, \ldots, \zeta_{n}\right) \in \mathbb{S}^{n-1}$. Then, we obtain the following result.
	\begin{lem}\label{lem-3.2}
		For any $\alpha<1$, $\beta \in[0, \pi]$ and $ \rho \in[0,1)$, we have
		\begin{equation}\label{eq-3.2}
			\mathbf{C}_{\alpha, \infty}\left(\rho e_{1} ; l_{\beta}\right)=C_{n,\alpha}\left(1-\rho^{2}\right)^{-\alpha} C_{\alpha, \infty}\left(\rho e_{1} ; l_{\beta}\right).
		\end{equation}
	\end{lem}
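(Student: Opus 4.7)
The plan is to compute $\langle\nabla_x P_\alpha(x,\zeta),l_\beta\rangle$ explicitly at $x=\rho e_1$ and then appeal to Lemma~\ref{lem-3.1}. By Lemma~\ref{lem-3.1}, we have
$$
\mathbf{C}_{\alpha,\infty}(\rho e_1;l_\beta)=\sup_{\zeta\in\mathbb{S}^{n-1}}\bigl|\langle\nabla_x P_\alpha(\rho e_1,\zeta),l_\beta\rangle\bigr|.
$$
Thus the task reduces to writing the integrand on the right-hand side in closed form and matching it with the expression defining $C_{\alpha,\infty}(\rho e_1;l_\beta)$ in \eqref{eq-3.1}.

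Starting from the Poisson--Szeg\H{o} kernel \eqref{eq-1.2}, I would compute
$$
\nabla_x P_\alpha(x,\zeta)=C_{n,\alpha}\left[\frac{-2(1-\alpha)(1-|x|^2)^{-\alpha}\,x}{|x-\zeta|^{n-\alpha}}-\frac{(n-\alpha)(1-|x|^2)^{1-\alpha}(x-\zeta)}{|x-\zeta|^{n-\alpha+2}}\right]
$$
by straightforward differentiation (product rule applied to $(1-|x|^2)^{1-\alpha}$ and to $|x-\zeta|^{-(n-\alpha)}$). Specializing to $x=\rho e_1$, using $|x|^2=\rho^2$ and $|\rho e_1-\zeta|^2=1+\rho^2-2\rho\zeta_1$, and factoring out $C_{n,\alpha}(1-\rho^2)^{-\alpha}/|\rho e_1-\zeta|^{n-\alpha+2}$ yields
$$
\nabla_x P_\alpha(\rho e_1,\zeta)=\frac{C_{n,\alpha}(1-\rho^2)^{-\alpha}}{|\rho e_1-\zeta|^{n+2-\alpha}}\,V(\rho,\zeta),
$$
where $V(\rho,\zeta)=-2(1-\alpha)\rho(1+\rho^2-2\rho\zeta_1)e_1-(n-\alpha)(1-\rho^2)(\rho e_1-\zeta)$.

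Next I would take the inner product with $l_\beta=\cos\beta\cdot e_1+\sin\beta\cdot e_2$. The $\sin\beta$-term picks out only the $e_2$-component of $V(\rho,\zeta)$, giving $(n-\alpha)(1-\rho^2)\zeta_2\sin\beta$. The $\cos\beta$-term picks out the $e_1$-component, which after collecting powers of $\rho$ and separating the terms independent of $\zeta_1$ from those proportional to $\zeta_1$ should simplify to
$$
-\bigl[(2-3\alpha+n)\rho+(2-\alpha-n)\rho^3+(\alpha-n+(-4+3\alpha+n)\rho^2)\zeta_1\bigr]\cos\beta.
$$
After taking absolute values (the overall sign is irrelevant) and the supremum over $\zeta\in\mathbb{S}^{n-1}$, the resulting expression coincides exactly with $C_{n,\alpha}(1-\rho^2)^{-\alpha}\,C_{\alpha,\infty}(\rho e_1;l_\beta)$ as defined in \eqref{eq-3.1}, which yields \eqref{eq-3.2}.

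The main obstacle is not conceptual but bookkeeping: one has to correctly expand
$$
-2(1-\alpha)\rho(1+\rho^2-2\rho\zeta_1)-(n-\alpha)(1-\rho^2)(\rho-\zeta_1)
$$
and group the constant, $\rho$, $\rho^3$, $\zeta_1$, and $\rho^2\zeta_1$ contributions so that the coefficients match $(2-3\alpha+n)$, $(2-\alpha-n)$, $(\alpha-n)$, and $(-4+3\alpha+n)$ respectively. Care with signs—particularly the fact that the overall bracket in \eqref{eq-3.1} appears with a minus sign inside the absolute value—will be the only delicate point.
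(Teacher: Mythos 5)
Your proposal is correct and follows essentially the same route as the paper: invoke Lemma \ref{lem-3.1}, differentiate the kernel \eqref{eq-1.2} to get $\nabla_x P_\alpha(x,\zeta)$, factor out $C_{n,\alpha}(1-\rho^2)^{-\alpha}|\rho e_1-\zeta|^{-(n+2-\alpha)}$ at $x=\rho e_1$, and match the inner product with $l_\beta$ against \eqref{eq-3.1}; the coefficient bookkeeping you flag does check out (e.g. $-2(1-\alpha)-(n-\alpha)=-(2-3\alpha+n)$ and $4(1-\alpha)-(n-\alpha)=-(-4+3\alpha+n)$), so there is no gap.
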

	\begin{proof}
		For any  $x \in \mathbb{B}^{n}$ and $l \in \mathbb{S}^{n-1}$,  it follows from \eqref{eq-1.2}  that
		\begin{align}\label{eq-3.3}
			\sup _{\zeta \in \mathbb{S}^{n-1}}\left|\left\langle\nabla P_{\alpha}(x, \zeta), l\right\rangle\right| \notag
			= & C_{n,\alpha}\left(1-|x|^{2}\right)^{-\alpha} \\
			&\times\max _{\zeta \in \mathbb{S}^{n-1}} \frac{\left|\left.\langle 2(1-\alpha)x| x-\left.\zeta\right|^{2}+(n-\alpha)\left(1-|x|^{2}\right)(x-\zeta), l\right\rangle\right|}{|x-\zeta|^{n+2-\alpha}}.
		\end{align}
		Let $x=\rho e_{1}$ and  $l=l_{\beta}$.
		By   straightforward calculations, we get
		\begin{align*}
			& \max _{\zeta \in \mathbb{S}^{n-1}} \frac{\left|\left.\langle 2(1-\alpha)x| x-\left.\zeta\right|^{2}+(n-\alpha)\left(1-|x|^{2}\right)(x-\zeta), l\right\rangle\right|}{|x-\zeta|^{n+2-\alpha}}\\
			= & \max _{\zeta \in \mathbb{S}^{n-1}} \frac{1}{\left|\rho e_{1}-\zeta\right|^{n+2-\alpha}}\big| (n-\alpha)(1-\rho^{2}) \sin \beta \cdot \zeta_{2} \notag\\
			&- \big((2 -3 \alpha + n) \rho   + (2 - \alpha - n) \rho^3 + (\alpha -
			n + (-4 +3\alpha + n) \rho^2) \zeta_{1} \big) \cos \beta\big|.
		\end{align*}
		This, together with Lemma \ref{lem-3.1}, \eqref{eq-3.1} and \eqref{eq-3.3}, implies that \eqref{eq-3.2} holds true.
	\end{proof}

	\begin{lem}\label{lem-3.3}
		For any $ \alpha<1$, $x \in \mathbb{B}^{n}$, $l \in \mathbb{S}^{n-1}$  and unitary transformation  $A$  in $\mathbb{R}^{n}$, we have
		$$
		\mathbf{C}_{\alpha, \infty}(x ; l)=\mathbf{C}_{\alpha, \infty}(A x ; A l).
		$$
	\end{lem}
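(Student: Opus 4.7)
The plan is to reduce the identity to the invariance of the Poisson-Szeg\"o kernel under simultaneous unitary transformation of its two arguments, and then invoke Lemma \ref{lem-3.1}. The starting point is the observation that, by \eqref{eq-1.2}, the kernel $P_\alpha(x,\zeta)=C_{n,\alpha}(1-|x|^2)^{1-\alpha}|x-\zeta|^{-(n-\alpha)}$ depends on $x$ and $\zeta$ only through $|x|$ and $|x-\zeta|$. Since a unitary (orthogonal) transformation $A$ preserves the Euclidean norm, $|Ax|=|x|$ and $|Ax-A\zeta|=|A(x-\zeta)|=|x-\zeta|$, so
\[
P_\alpha(Ax,A\zeta)=P_\alpha(x,\zeta)\quad\text{for all }x\in\mathbb{B}^n\text{ and }\zeta\in\mathbb{S}^{n-1}.
\]

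Next I would transfer this invariance to the gradient in the first variable. Fixing $\zeta\in\mathbb{S}^{n-1}$ and setting $h(x):=P_\alpha(x,\zeta)=P_\alpha(Ax,A\zeta)$, the chain rule gives $\nabla h(x)=A^T(\nabla P_\alpha)(Ax,A\zeta)$, where the inner $\nabla$ is taken with respect to the first argument (as in the statement of Lemma \ref{lem-3.1}). Since also $\nabla h(x)=\nabla P_\alpha(x,\zeta)$, this yields
\[
\nabla P_\alpha(x,\zeta)=A^T(\nabla P_\alpha)(Ax,A\zeta).
\]
Pairing with $l$ and using $\langle A^Tv,l\rangle=\langle v,Al\rangle$, I obtain
\[
\langle\nabla P_\alpha(x,\zeta),l\rangle=\langle(\nabla P_\alpha)(Ax,A\zeta),Al\rangle.
\]

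Finally, I would take absolute values and pass to the supremum over $\zeta\in\mathbb{S}^{n-1}$. Because $A$ restricts to a bijection of $\mathbb{S}^{n-1}$, the substitution $\eta=A\zeta$ gives
\[
\sup_{\zeta\in\mathbb{S}^{n-1}}\bigl|\langle\nabla P_\alpha(x,\zeta),l\rangle\bigr|=\sup_{\eta\in\mathbb{S}^{n-1}}\bigl|\langle(\nabla P_\alpha)(Ax,\eta),Al\rangle\bigr|,
\]
and Lemma \ref{lem-3.1} identifies the two sides as $\mathbf{C}_{\alpha,\infty}(x;l)$ and $\mathbf{C}_{\alpha,\infty}(Ax;Al)$, respectively. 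There is no serious obstacle; the lemma is essentially a bookkeeping consequence of the $O(n)$-invariance of the Poisson-Szeg\"o kernel, with the only substantive step being the chain-rule identity above.
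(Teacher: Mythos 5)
Your argument is correct and is essentially the intended one: the paper omits the proof, citing the analogous Lemma 4.1 of the hyperbolic-harmonic case with $P_{\alpha}$, Lemma \ref{lem-3.1} and \eqref{eq-3.3} in place of the corresponding objects there, and that argument is precisely the $O(n)$-invariance $P_{\alpha}(Ax,A\zeta)=P_{\alpha}(x,\zeta)$ combined with the chain rule, the pairing identity $\langle A^{T}v,l\rangle=\langle v,Al\rangle$, and the change of variables $\eta=A\zeta$ on $\mathbb{S}^{n-1}$ that you carry out. No gaps.
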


	\begin{proof}
		The proof of this result is similar to that of \cite[Lemma 4.1]{chen2023B}, where $P_{\alpha}$, Lemma \ref{lem-3.1} and \eqref{eq-3.3} is used
		instead of $\mathcal{P}_{\mathbb{B}^{n}}$, \cite[Lemma 4.1]{chen2023B} and \cite[Equation (4.7)]{chen2023B}, respectively.
		Hence, we omit the proof of the lemma.
	\end{proof}

	\begin{lem}\label{lem-3.4}
		Let  $l\in\mathbb{S}^{n-1}$ and $x \in \mathbb{B}^{n}\backslash\{0\} $.
		
		$(1)$  If $2-n\leq\alpha<1$, then
		$$
		\mathbf{C}_{\alpha, \infty}(x ; l) \leq \mathbf{C}_{\alpha, \infty}\left(x ; \pm n_{x}\right)=\mathbf{C}_{\alpha, \infty}(x)=C_{n,\alpha} \frac{ n-\alpha+(n+\alpha-2)|x| }{(1-|x|)^{n}}.
		$$
		
		$(2)$	If $\alpha<2-n$, then
		$$
		C_{n,\alpha} \frac{ n-\alpha+(n+\alpha-2)|x| }{(1-|x|)^{n}}\leq \mathbf{C}_{\alpha, \infty}(x)\leq C_{n,\alpha} \frac{ n-\alpha-(n+\alpha-2)|x| }{(1-|x|)^{ n}}.
		$$
	\end{lem}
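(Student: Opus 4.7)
The plan is to combine Lemmas~\ref{lem-3.1}--\ref{lem-3.3} to reduce the computation to a concrete one-parameter optimization, then analyze the two cases separately. By Lemma~\ref{lem-3.3}, $\mathbf{C}_{\alpha,\infty}(x;l)$ is invariant under unitary transformations, so I may assume $x=\rho e_{1}$ with $\rho=|x|$ and write $l=l_{\beta}=\cos\beta\, e_{1}+\sin\beta\, e_{2}$ for some $\beta\in[0,\pi]$. Lemma~\ref{lem-3.2} then expresses $\mathbf{C}_{\alpha,\infty}(\rho e_{1};l_{\beta})$ as $C_{n,\alpha}(1-\rho^{2})^{-\alpha}$ times the maximum appearing in \eqref{eq-3.1}.

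For $\beta=0$ the numerator in \eqref{eq-3.1} depends on $\zeta$ only through $t=\zeta_{1}\in[-1,1]$, reducing the problem to maximizing $h(t)=|F(t)|/(1+\rho^{2}-2\rho t)^{(n+2-\alpha)/2}$, where $F(t)$ is an explicit affine function of $t$. Matching coefficients (and using that $\rho=1$ is a double root of the polynomial identity below in $\rho$) yields the factorizations
\[
|F(1)|=(1-\rho)^{2}\bigl[(n-\alpha)+(n+\alpha-2)\rho\bigr],\qquad |F(-1)|=(1+\rho)^{2}\bigl[(n-\alpha)-(n+\alpha-2)\rho\bigr],
\]
giving $h(1)=[(n-\alpha)+(n+\alpha-2)\rho]/(1-\rho)^{n-\alpha}$ (and a parallel formula for $h(-1)$). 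A direct study of $(h^{2})'(t)$ shows that any interior critical point yields a value strictly smaller than $h(1)$, so the one-dimensional maximum is attained at $t=1$, i.e., at $\zeta=n_{x}$, producing via Lemma~\ref{lem-3.2} the claimed expression for $\mathbf{C}_{\alpha,\infty}(x;\pm n_{x})$.

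To complete part~(1) I would show that under the hypothesis $2-n\le\alpha<1$ (which forces $n+\alpha-2\ge 0$) the full maximum $C_{\alpha,\infty}(\rho e_{1};l_{\beta})$ is non-increasing on $\beta\in[0,\pi/2]$, analogous to Lemma~A and Corollary~\ref{cor-2.1} in the $p<\infty$ setting. Bounding the quantity in \eqref{eq-3.1} by $|(\cdots)\cos\beta|+(n-\alpha)(1-\rho^{2})|\sin\beta\cdot\zeta_{2}|$ and applying the $\beta=0$ analysis to each piece gives the desired comparison. For part~(2), where $\alpha<2-n$ and thus $n+\alpha-2<0$, the lower bound is immediate from the $\beta=0$, $\zeta=e_{1}$ evaluation above. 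For the upper bound, I would use \eqref{eq-3.3} together with a triangle-inequality estimate on $2(1-\alpha)x|x-\zeta|^{2}+(n-\alpha)(1-|x|^{2})(x-\zeta)$, combined with the denominator bound $|x-\zeta|\ge 1-|x|$, and track cancellations carefully to extract the claimed numerator $(n-\alpha)-(n+\alpha-2)|x|$.

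The main obstacle will be the $\beta$-monotonicity in part~(1). Unlike the $L^{p}$ setting for $p<\infty$, where the $\beta$-dependence is integrated against a symmetric measure on $\mathbb{S}^{n-1}$ (so that Lemma~A applies directly), here the supremum is attained at a single maximizer $\zeta^{*}=\zeta^{*}(\beta)$ whose dependence on $\beta$ is only piecewise smooth; pinning down the precise role of the hypothesis $n+\alpha-2\ge 0$ in forcing $\beta=0$ to be optimal demands careful case analysis of the envelope of maximizers. The triangle-inequality step in part~(2) is more routine but requires care to avoid the cruder estimate with coefficient $n+2-3\alpha$ in place of the desired $-(n+\alpha-2)=2-n-\alpha$.
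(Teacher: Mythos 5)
Your reduction coincides with the paper's up to a point: unitary invariance (Lemma \ref{lem-3.3}), the factorization of Lemma \ref{lem-3.2}, passage to spherical coordinates, and the evaluation at $\beta=0$, $\zeta=e_{1}$ giving the lower bound $\frac{n-\alpha+(n+\alpha-2)\rho}{(1-\rho)^{n-\alpha}}$ are all the same. But the decisive step --- showing that $\beta=0$ (i.e.\ $l=\pm n_{x}$) is optimal in part (1), and obtaining the upper bound in part (2) --- is exactly what you leave unresolved, and the device you propose for it cannot work. Bounding the expression in \eqref{eq-3.1} by $|(\cdots)|\cos\beta+(n-\alpha)(1-\rho^{2})|\sin\beta|\,|\zeta_{2}|$ and applying the $\beta=0$ analysis ``to each piece'' gives at best $\cos\beta\,C_{\alpha,\infty}(\rho e_{1};e_{1})+|\sin\beta|\,C_{\alpha,\infty}(\rho e_{1};e_{2})$, which exceeds $C_{\alpha,\infty}(\rho e_{1};e_{1})$ for every $\beta\in(0,\pi/2)$ because $\cos\beta+|\sin\beta|\ge 1$; no monotonicity in $\beta$ can be extracted this way, and no analogue of Lemma A is available here since the supremum over $\zeta$ is not an integral against a rotation-invariant measure. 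Likewise, in part (2) a plain triangle-inequality estimate of $2(1-\alpha)x|x-\zeta|^{2}+(n-\alpha)(1-|x|^{2})(x-\zeta)$ with $|x-\zeta|\ge 1-|x|$ yields precisely the coefficient $n+2-3\alpha$ you say you must avoid, and you do not indicate what cancellation would replace it. Finally, your claim that for $\beta=0$ every interior critical point of $h^{2}$ is dominated by $h(1)$ is asserted, not proved.

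The paper closes this gap with a different mechanism: starting from \eqref{eq-3.5} it applies the Cauchy--Schwarz inequality $|a\sin\beta\pm b\cos\beta|\le(a^{2}+b^{2})^{1/2}$, which eliminates $\beta$ altogether, and then observes that the resulting quantity $K_{\alpha}(t)$ (with $t=\cos\theta$) satisfies $\frac{d}{dt}K_{\alpha}(t)=\frac{2(n-\alpha)(n+\alpha-2)\rho(1-\rho^{2})^{2}}{(1+\rho^{2}-2\rho t)^{2}}$, so it is monotone on $[-1,1]$ with sign governed by $n+\alpha-2$. For $2-n\le\alpha<1$ the maximum is at $t=1$, where $\sin\theta=0$ and the bound collapses to the $\beta=0$ value $(n-\alpha+(n+\alpha-2)\rho)^{2}$, which matches the lower bound and proves $\sup_{\beta}C_{\alpha,\infty}(\rho e_{1};l_{\beta})=C_{\alpha,\infty}(\rho e_{1};\pm e_{1})$; for $\alpha<2-n$ the maximum is at $t=-1$, giving $K_{\alpha}(-1)=(\alpha-n+(n+\alpha-2)\rho)^{2}$ and hence the upper bound in part (2). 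Without this step (or a genuine substitute for it) your argument establishes only the lower bounds, so as it stands the proposal has an essential gap at the heart of the lemma.
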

	\begin{proof}
		For any  $x \in \mathbb{B}^{n} \backslash\{0\}  $  and  $l \in \mathbb{S}^{n-1} $, let  $\beta=\arccos \left\langle n_{x}, l\right\rangle \in[0, \pi]$
		and $A$ be a unity transformation in  $\mathbb{R}^{n}$  such that $A x=\rho e_{1}$  and $A l=l_{\beta}$,
		where $\rho=|x| \in(0,1)$.
		Using \eqref{eq-2.2}, Lemmas \ref{lem-3.2} and \ref{lem-3.3}, we get that
		\begin{align}\label{eq-3.4}
			\mathbf{C}_{\alpha, \infty}(x) & =\sup _{l \in \mathbb{S}^{n-1}} \mathbf{C}_{\alpha, \infty}(x ; l)=\sup _{l \in \mathbb{S}^{n-1}} \mathbf{C}_{\alpha, \infty}(A x ; A l)=\sup _{\beta \in[0, \pi]} \mathbf{C}_{\alpha, \infty}\left(\rho e_{1} ; l_{\beta}\right) \notag\\
			& =C_{n,\alpha}\left(1-\rho^{2}\right)^{-\alpha} \sup _{\beta \in[0, \pi]}C_{\alpha, \infty}\left(\rho e_{1} ; l_{\beta}\right).
		\end{align}
		Hence, to prove the lemma, it suffices to estimate the quantity  $\sup _{\beta \in[0, \pi]} C_{\alpha, \infty}\left(\rho e_{1} ; l_{\beta}\right)$.
		\begin{claim}\label{claim-3.1}
			$(1)$ If  $2-n\leq\alpha<1$, then
			$$
			\sup _{\beta \in[0, \pi]} C_{\alpha, \infty}\left(\rho e_{1} ; l_{\beta}\right)=C_{\alpha, \infty}\left(\rho e_{1} ; \pm e_{1}\right)=\frac{ n-\alpha+(n+\alpha-2)\rho }{(1-\rho)^{ n-\alpha}};$$
			$(2)$ If $\alpha<2-n$, then
			$$
			\frac{ n-\alpha+(n+\alpha-2)\rho }{(1-\rho)^{n-\alpha}}\leq \sup _{\beta \in[0, \pi]} C_{\alpha, \infty}\left(\rho e_{1} ; l_{\beta}\right)\leq \frac{  n-\alpha-(n+\alpha-2)\rho }{(1-\rho)^{ n-\alpha}}.
			$$
		\end{claim}
		By \eqref{eq-3.1} and spherical coordinate transformation (cf. \cite[Equation (2.2)]{chen2018}), we find
		\begin{align*}
			C_{\alpha, \infty}&\left(\rho e_{1} ; l_{\beta}\right)
			=\max _{\theta \in[0, \pi]} \max _{\gamma \in[0, \pi]}   \frac{1}{\left|1+\rho^{2}-2\rho \cos\theta\right|^{\frac{n-\alpha}{2}+1}} \big| (n-\alpha)(1-\rho^{2}) \sin \beta  \sin\theta \cos\gamma\\
			& -\big((2 -3 \alpha + n) \rho+ (2 - \alpha - n) \rho^3+ (\alpha -
			n + (-4 +3\alpha + n) \rho^2) \cos\theta\big) \cos \beta\big| .
		\end{align*}
		Obviously, the maximum in  $\gamma$  is attained either at  $\gamma=0$  or  $\gamma=\pi$.
		Hence,
		\begin{align}\label{eq-3.5}
			&C_{\alpha, \infty} (\rho e_{1} ; l_{\beta} )=\max_{\theta \in[0,2 \pi]}\frac{1}{ |1+\rho^{2}-2\rho \cos\theta |^{\frac{n-\alpha}{2}+1}}\big|  (n-\alpha)(1-\rho^{2}) \sin \beta  \sin\theta   \notag\\
			& \pm \big((2 -3 \alpha + n) \rho+ (2 - \alpha - n) \rho^3 + ( \alpha -
			n + (-4 +3\alpha + n) \rho^2) \cos\theta\big) \cos \beta\big| .
		\end{align}
		This yields to
		\begin{align}\label{eq-3.6}
			&\sup _{\beta \in[0, \pi]}C_{\alpha, \infty}\left(\rho e_{1} ; l_{\beta}\right)  \geq C_{\alpha, \infty}\left(\rho e_{1} ;  e_{1}\right) \\
			&=\max _{\theta \in[0,2 \pi]} \frac{\left|(2 -3 \alpha + n) \rho+ (2 - \alpha - n) \rho^3 + ( \alpha -
				n + (-4 +3\alpha + n) \rho^2) \cos\theta\right|}{\left(1+\rho^{2}-2 \rho \cos \theta\right)^{\frac{n-\alpha}{2}+1}}\notag \\
			& \geq \frac{\left|(2 -3 \alpha + n) \rho+ (2 - \alpha - n) \rho^3 + ( \alpha -
				n + (-4 +3\alpha + n) \rho^2) \right|}{(1-\rho)^{n-\alpha+2}}\notag\\\nonumber
			&=\frac{ n-\alpha+(n+\alpha-2)\rho }{(1-\rho)^{ n-\alpha}}.
		\end{align}
		On the other hand, by \eqref{eq-3.5} and Cauchy-Schwarz inequality, we have
		\begin{align}\label{eq-03.7}
			&C_{\alpha, \infty}(\rho e_{1} ; l_{\beta} )
			\leq \frac{1}{(1-\rho)^{n-\alpha}} \max _{\theta \in[0,2 \pi]}
			\left(\frac{ ( n-\alpha)^{2}(1-\rho^{2})^{2} \sin^{2} \theta }{ (1+\rho^{2}-2 \rho \cos \theta )^{2}}\right. \\
			&\;\;+\left.\frac{\big((2 -3 \alpha + n) \rho+ (2 - \alpha - n) \rho^3 + ( \alpha -
				n + (-4 +3\alpha + n) \rho^2) \cos\theta \big)^{2}}{ (1+\rho^{2}-2 \rho \cos\theta  )^{2}}
			\right)^{\frac{1}{2}}\notag\\
			&=\frac{1}{(1-\rho)^{n-\alpha}} \max _{\theta \in[0,2 \pi]}
			K^{\frac{1}{2}}_{\alpha}(\cos\theta),
			\notag\\\nonumber
		\end{align}
		where
		\begin{align*}
			K_{\alpha}(t)=&\frac{1}{1+\rho^{2}-2 \rho t}\left((- \alpha + n)^2 +
			2 (2 -3 \alpha (2 -\alpha) - (-2 + n) n) \rho^2 \right.\\
			&\left. + (-2 + \alpha + n)^2 \rho^4- 4 (1 -\alpha) \rho (- \alpha + n + (2 - \alpha - n) \rho^2) t\right)
		\end{align*}
		for any $t\in[-1,1]$.
		Basic calculations show that
		$$
		\frac{d}{d t}K_{\alpha }(t)= \frac{2(n-\alpha)(n+\alpha-2)  \rho\left(-1+\rho^{2}\right)^{2}}{\left(1+\rho^{2}-2 \rho t\right)^{2}}.
		$$
		Therefore, when   $2-n\leq\alpha<1$ and  $\rho\in(0,1)$,  $K_{\alpha}(t)$ is   increasing  in $[-1,1]$ and
		\begin{equation}\label{eq-3.8}
			\max_{t\in[-1,1]} K_{\alpha}(t)=K_{\alpha}(1)=(n-\alpha+(n+\alpha-2)\rho)^{2};
		\end{equation}
		when   $\alpha<2-n$ and $\rho\in(0,1)$, $K_{\alpha}(t)$ is  decreasing   in $[-1,1]$ and
		\begin{equation}\label{eq-3.9}
			\max_{t\in[-1,1]} K_{\alpha}(t)=K_{\alpha}(-1)=(\alpha-n+(n+\alpha-2)\rho)^{2}.
		\end{equation}
		
		Now, it follows from \eqref{eq-3.6}-\eqref{eq-3.8} that
		$$
		\sup _{\beta \in[0, \pi]} C_{\alpha, \infty}\left(\rho e_{1} ; l_{\beta}\right)=C_{\alpha, \infty}\left(\rho e_{1} ; \pm e_{1}\right)=\frac{ n-\alpha+(n+\alpha-2)\rho }{(1-\rho)^{ n-\alpha}}
		$$
		for $2-n\leq\alpha<1$;
		Similarly,    \eqref{eq-3.6}, \eqref{eq-03.7} and \eqref{eq-3.9} ensure that
		$$
		\frac{ n-\alpha+(n+\alpha-2)\rho }{(1-\rho)^{ n-\alpha}}\leq \sup _{\beta \in[0, \pi]} C_{\alpha, \infty}\left(\rho e_{1} ; l_{\beta}\right)\leq \frac{  n-\alpha-(n+\alpha-2)\rho }{(1-\rho)^{ n-\alpha}}
		$$
		for  $\alpha<2-n$.
		Thus, Claim 3.1 is true.
		
		By \eqref{eq-3.4} and Claim \ref{claim-3.1},  we see that Lemma \ref{lem-3.4} is true.
	\end{proof}

	%%%%%%%%%%%%%%%%%%%%%%%%%%%%%%%%%%%%%%%%%%%%%%%%%%%%%%%%%%%%%%%%
	
	\section{Proof of Theorem \ref{thm-1.3}}
	
	The aim of this section is to prove Theorem \ref{thm-1.3}.
	\subsection{Proof of Theorem \ref{thm-1.3}}
	
	For any $r\in(0,1)$ and $x^{\prime}$, $x^{\prime \prime} \in   \mathbb{B}^{n}\left( r\right)$, we have that
	\begin{align}\label{eq-4.1}
		\left|u\left(x^{\prime}\right)-u\left(x^{\prime \prime}\right)\right|
		\geq & \left|\int_{\left[x^{\prime}, x^{\prime \prime}\right]} Du(0) d x\right|-\int_{\left[x^{\prime}, x^{\prime \prime}\right]}\|Du(x)-Du(0)\| d x \\\nonumber
		\geq &    \int_{ [x^{\prime}, x^{\prime \prime} ]}   \big( l (Du(0))  -\|D u(0)-Du(x)\| \big) d x,
	\end{align}
	where $\left[x^{\prime}, x^{\prime \prime}\right]$    denotes the segment from  $x^{\prime}$  to  $x^{\prime \prime}$  with the endpoints  $x^{\prime}$  and  $x^{\prime \prime}$.
	
	In order to prove our theorem,
	we still  need to estimate the quantities $l\big(Du(0)\big)$ and $\|D u(0)-Du(x)\|$.
	
	\medskip

	\noindent \textbf{$(\mathfrak{A})$ Estimate on   $l\big(Du(0)\big)$.}
	Since   $\|\phi\|_{L^{\infty}(\mathbb{S}^{n-1},\mathbb{R}^{n})}\leq M$ for some  constant $M>0$,
	by \eqref{eq-1.4}, we get that
	$$
	\|D u(0)\| \leq  M (n-\alpha)\frac{C_{n,\alpha}\Gamma\left(\frac{n}{2}\right)}{\sqrt{\pi} \Gamma\left(\frac{n+1}{2}\right)} =:N_{*}.
	$$
	Then   \cite[Lemma 4]{LXY1992}  and the assumption ``$J_{u}(0)=1$"  guarantee that
	\begin{equation}\label{eq-4.2}
		l\big(Du(0)\big) \geq \frac{J_{u}(0)}{\|Du(0)\|^{n-1}} \geq \frac{1}{ N_{*}^{n-1}}.
	\end{equation}

	\noindent \textbf{$(\mathfrak{B})$ Estimate on   $\|Du(0)-Du(x)\|$.}
	For any   $x \in \mathbb{B}^{n}  $, it follows from \eqref{eq-1.2} that
	$$
	\nabla_{x}  P _{\alpha}(x, \zeta) = -C_{n,\alpha} \frac{2(1 - \alpha) x |x - \zeta|^{2} + (n - \alpha)(1 - |x|^{2})(x - \zeta)}{(1 - |x|^{2})^{ \alpha}|x - \zeta|^{n + 2 - \alpha}}.
	$$
	Then for any $\eta\in\mathbb{S}^{n-1}$,
	\begin{align*}
		\big|\big(&Du (0)-Du(x)\big)\eta\big|\\
		=&\left|\int_{\mathbb{S}^{n-1}} \big \langle \nabla_{x}  P_{\alpha}(0, \zeta)-\nabla_{x}  P _{\alpha}(x, \zeta),\eta \big\rangle \phi(\zeta)d\sigma(\zeta)\right| \\
		=& C_{n,\alpha}
		\bigg|(n-\alpha)\int_{\mathbb{S}^{n-1}}\langle\zeta,\eta\rangle\phi(\zeta) d\sigma(\zeta)\\
		&+
		\int_{\mathbb{S}^{n-1}} \frac{ 2(1-\alpha)\langle x,\eta\rangle|x-\zeta|^{2}
			+(n-\alpha) (1-|x|^{2})\langle x-\zeta ,\eta\rangle }{(1-|x|^{2})^{ \alpha}|x-\zeta|^{n+2-\alpha}}\phi(\zeta) d\sigma(\zeta)
		\bigg| .
	\end{align*}
	This, together with the assumption $\|\phi \|_{L^{\infty}(\mathbb{S}^{n-1},\mathbb{R}^{n})}\leq M$,
	yields to
	\begin{align}\label{eq-4.3}
		& \|Du(0)-Du(x)\|\leq
		M (n-\alpha) C_{n,\alpha}\int_{\mathbb{S}^{n-1}} \left|1-\frac{(1-|x|^{2})^{1-\alpha}}{|x-\zeta|^{n+2-\alpha}} \right|d\sigma(\zeta)\\\nonumber
		&+ MC_{n,\alpha}|x|(1-|x|^{2})^{-\alpha}\left(\int_{\mathbb{S}^{n-1}}\frac{2(1-\alpha)}{|x-\zeta|^{n-\alpha}}d\sigma(\zeta)
		+\int_{\mathbb{S}^{n-1}}\frac{(n-\alpha)(1-|x|^{2})}{|x-\zeta|^{n+2-\alpha}}d\sigma(\zeta)\right).
	\end{align}
	
	Now, we estimate the right-hand side of the above inequality.
	By  calculations, we get
	\begin{align*}
		|&x-\zeta|^{n+2-\alpha}-(1-|x|^{2})^{1-\alpha} \\
		&= (1-2\langle x,\zeta\rangle+|x|^{2})^{\frac{n+2-\alpha}{2}}-(1-|x|^{2})^{1-\alpha} \\
		&= (1+|x|^{2})^{\frac{n+2-\alpha}{2}}\sum_{k=0}^{\infty}\frac{(\frac{\alpha-n-2}{2})_{k}}{k!}\left(\frac{2\langle x,\zeta\rangle}{1+|x|^{2}}\right)^{k}-\sum_{k=0}^{\infty}\frac{(\alpha-1)_{k}}{k!}|x|^{2k} \\
		&= \sum_{k=1}^{\infty}\binom{\frac{n+2-\alpha}{2}}{k}|x|^{2k}
		+(1+|x|^{2})^{\frac{n+2-\alpha}{2}}\sum_{k=1}^{\infty}\frac{(\frac{\alpha-n-2}{2})_{k}}{k!}\left(\frac{2\langle x,\zeta\rangle}{1+|x|^{2}}\right)^{k}
		-\sum_{k=1}^{\infty}\frac{(\alpha-1)_{k}}{k!}|x|^{2k}.
	\end{align*}
	Hence,
	\begin{align*}
		&\big| |x-\zeta|^{n+2-\alpha}-(1-|x|^{2})^{1-\alpha} \big|\\	
		& \leq\left|\sum_{k=1}^{\infty}\binom{\frac{n+2-\alpha}{2}}{k}|x|^{2k}\right|
		+(1+|x|^{2})^{\frac{n+2-\alpha}{2}}\left|\sum_{k=1}^{\infty}\frac{(\frac{\alpha-n-2}{2})_{k}}{k!}\frac{2^{k}|x|^{k}}{\left(1+|x|^{2}\right)^{k}}\right|
		+\left|\sum_{k=1}^{\infty}\frac{(\alpha-1)_{k}}{k!}|x|^{2k}\right|\\
		& =(1 +|x|^{2})^{\frac{n+2-\alpha}{2}} +(1+|x|^{2})^{ \frac{n+2-\alpha}{2}}\left(1-\left(1-\frac{2|x|}{1+|x|^{2}}\right)^{\frac{n+2-\alpha }{2} } \right) -(1-|x|^{2})^{ 1-\alpha} \\
		&= 2(1 +|x|^{2})^{\frac{n+2-\alpha}{2}}
		-(1-|x| )^{n+2-\alpha  }   -(1-|x|^{2})^{ 1-\alpha},
	\end{align*}
	which implies that
	\begin{equation}\label{eq-4.4}
		\left|1-\frac{(1-|x|^{2})^{1-\alpha}}{|x-\zeta|^{n+2-\alpha}} \right|\leq \frac{ |x|g(|x|) }{|x - \zeta|^{n + 2 - \alpha}},
	\end{equation}
	where $g(|x|)$  is the mapping from  \eqref{eq-1.7}.
	
	On the other hand, by \cite[Lemma 2.1]{Liu04} and \cite[Theorem 2.2.5]{an1999}, we know that for any $\lambda\in \mathbb{R}$,
	\begin{align*}
		\int_{\mathbb{S}^{n-1} } \frac{ d\sigma(\zeta)}{ |x-\zeta|^{2\lambda}}
		&={ }_{2} F_{1}\left(\lambda,\lambda-\frac{n}{2}+1; \frac{n}{2} ; |x|^{2}\right)\\
		&= (1-|x|^{2})^{n-2\lambda-1}{ }_{2} F_{1}\left(\frac{n}{2}-\lambda, n-\lambda-1; \frac{n}{2} ; |x|^{2}\right).
	\end{align*}
	This, together with  \eqref{eq-4.3} and \eqref{eq-4.4},   provides that
	\begin{align*}
		\|Du(0)-Du(x)\|
		&\leq
		2M(1-\alpha) C_{n,\alpha}|x|(1-|x|^{2})^{-1}{}_{2}F_{1}\left(\frac{\alpha}{2} , \frac{n+\alpha}{2}-1; \frac{n}{2} ;|x|^{2}\right)\\
		&\hspace{-2cm}+M(n-\alpha) C_{n,\alpha}|x|(1-|x|^{2})^{-2}
		{}_{2}F_{1}\left(\frac{\alpha}{2}-1 , \frac{n+\alpha}{2}-2; \frac{n}{2} ;|x|^{2}\right)\\
		&\hspace{-2cm}+M(n-\alpha) C_{n,\alpha} |x|(1-|x|^{2})^{\alpha-3}{}_{2}F_{1}\left(\frac{\alpha}{2}-1 , \frac{n+\alpha}{2}-2; \frac{n}{2} ;|x|^{2}\right)g(|x|).
	\end{align*}
	By \cite[Theorem 2.1.2]{an1999}, we see that
	both ${}_{2}F_{1}\left(\frac{\alpha}{2}, \frac{n-2+\alpha}{2}; \frac{n}{2};t\right)$ and ${}_{2}F_{1}\left(\frac{\alpha}{2}-1, \frac{n+\alpha}{2}-2; \frac{n}{2};t\right)$ are bounded on $[0, 1]$.
	Obviously, $g(t)$ is also bounded in $[0,1)$.
	Therefore,  for any $x \in \mathbb{B}^{n}\left(0, r\right)$,
	\begin{equation}\label{eq-4.5}
		\|Du(0)-Du(x)\|\leq M|x| G(r)<M r G(r),
	\end{equation}
	where $G(r)$ is given by \eqref{eq-1.6}.

	Now, we  show that there exists a constant  $r_{0}\in(0,1)$ such that $u$ is injective in $\mathbb{B}^{n}( r_{0})$.
	For any $r\in(0,1)$ and distinct points $x^{\prime}$, $x^{\prime \prime} \in   \mathbb{B}^{n}\left( r\right)$,
	we	 conclude from \eqref{eq-4.1}, \eqref{eq-4.2} and 	\eqref{eq-4.5} that
	\begin{align*}
		\left|u\left(x^{\prime}\right)-u\left(x^{\prime \prime}\right)\right|
		> |x^{\prime}-x^{\prime \prime}|\psi (r),
	\end{align*}
	where $$\psi (r)= \frac{1}{ N_{*}^{n-1}}-M rG(r). $$
	It is easy to check that
	$$
	\lim _{r \rightarrow 0^{+}} \psi(r)= \frac{1}{ N_{*}^{n-1}}>0\;\;\text{and}\;\;\lim _{r \rightarrow 1^{-}} \psi(r)=-\infty.
	$$
	Let $r_{0}$ be the smallest
	positive root of the equation    $ \psi(r )=0$.
	Then $u$ is injective in $\mathbb{B}^{n}( r_{0})$.
	
	Finally, we prove that the image $u(\mathbb{B}^{n}( r_{0}))$ contains a ball $\mathbb{B}^{n}( R_{0})$.
	For any  $\varsigma\in \{x\in\mathbb{R}^{n}:|x|=r_{0}\} $, by \eqref{eq-4.1}, \eqref{eq-4.2} and 	\eqref{eq-4.5},
	we obtain that
	\begin{align*}
		|u(\varsigma)-u(0)| \geq & \left|\int_{[0, \varsigma]} Du(0) d x\right|-\int_{[0, \varsigma]}\|Du(x)-Du(0)\| d x \\
		\geq & \int_{[0, \varsigma]} \left(\frac{1}{ N_{*}^{n-1}}-M|x|G(r_{0})\right)d x
		= R_{0},
	\end{align*}
	where $	R_{0}=\frac{M}{2} r^{2}_{0}G(r_{0}) $.	
	Since $u(0)=0$, we see that $\mathbb{B}^{n}( R_{0})\subset u(\mathbb{B}^{n}( r_{0}))$,
	and
	the proof of the theorem is complete.
	\qed

	\vspace*{3mm}
	\subsection*{ Acknowledgement.}
	The first author is
	supported by SERB-SRG (SRG/2023/001938).
	The second author is supported by Changsha Municipal Natural Science Foundation (No. kq2502155)
	and the construct program of the key discipline in Hunan Province.
	The   third author is supported by the Institute Post Doctoral
	Fellowship of IIT Madras, India.  
	The  fourth author is supported by NSF of China
	(No.12371071 and No.12571081).
	Some of the research for this paper was carried out in April 2025, when the first and third authors visited the School of Mathematics and Statistics at Hunan Normal University. They express their gratitude to the members of the department for their warm hospitality during that visit.

	\vspace*{5mm}
	\noindent{\bf Data availability.}
	Our manuscript does not have associated data.

	\vspace*{5mm}
	\noindent{\bf Conflict of interest.}
	The authors state that there is no conflict of interest.

\end{document}